\newtheorem{theorem}{Theorem}[section]
\newtheorem{lemma}[theorem]{Lemma}
\newtheorem{corollary}[theorem]{Corollary}
\theoremstyle{definition}
\newtheorem{definition}[theorem]{Definition}
\theoremstyle{remark}
\newtheorem{remark}[theorem]{Remark}
\numberwithin{equation}{section}
\begin{document}
\title[Quantum information inequalities]{Quantum information inequalities via tracial positive linear maps}
\author[A. Dadkhah and M.S. Moslehian ]{A. Dadkhah and M. S. Moslehian}
\address{Department of Pure Mathematics, Center Of Excellence in Analysis on Algebraic Structures (CEAAS), Ferdowsi University of Mashhad, P. O. Box 1159, Mashhad 91775, Iran}
\email{dadkhah61@yahoo.com}
\email{moslehian@um.ac.ir}
\subjclass[2010]{46L05, 47A63, 81P15}
\keywords{Tracial positive linear map; trace; generalized covariance, generalized variance; generalized correlation; generalized Wigner--Yanase skew information; $C^*$-algebra}
\begin{abstract}
We present some generalizations of quantum information inequalities involving tracial positive linear maps between $C^*$-algebras. Among several results, we establish a noncommutative Heisenberg uncertainty relation. More precisely, we show that if $\Phi: \mathcal{A} \to \mathcal{B}$ is a tracial positive linear map between $C^*$-algebras , $\rho \in \mathcal{A}$ is a $\Phi$-density element and $A,B$ are self-adjoint operators of $\mathcal{A}$ such that $ {\rm sp}(\mbox{-i}\rho^\frac{1}{2}[A,B]\rho^\frac{1}{2}) \subseteq [m,M] $ for some scalers $0<m<M$, then under some conditions
\begin{eqnarray}\label{inemain1}
V_{\rho,\Phi}(A)\sharp V_{\rho,\Phi}(B)\geq \frac{1}{2\sqrt{K_{m,M}(\rho[A,B])}} \left|\Phi(\rho [A,B])\right|,
\end{eqnarray}
 where $K_{m,M}(\rho[A,B])$ is the Kantorovich constant of the operator $\mbox{-i}\rho^\frac{1}{2}[A,B]\rho^\frac{1}{2}$ and $V_{\rho,\Phi}(X)$ is the generalized variance of $X$.\\ In addition, we use some arguments differing from the scalar theory to present some inequalities related to the generalized correlation and the generalized Wigner--Yanase--Dyson skew information.
\end{abstract} \maketitle
\section{Introduction and preliminaries}
In quantum measurement theory, the classical expectation value of
an observable (self-adjoint operator) $A$ in a quantum state
(density operator) $\rho$ is expressed by ${\rm Tr}(\rho A)$. Also, the classical variance
for a quantum state $\rho$ and an observable operator $A$ is defined by
$ V_\rho(A) := {\rm Tr}(\rho A^2) -({\rm Tr}(\rho A))^2$. The
Heisenberg uncertainty relation asserts that
\begin{eqnarray}\label{heis1}
V_{\rho}(A)V_{\rho}(B) \geq \frac{1}{4} |{\rm Tr}(\rho [A,B])|^2
\end{eqnarray}
for a quantum state $\rho$ and two observables $A$ and $B$; see \cite{He1}. It gives a fundamental limit for the
measurements of incompatible observables. A further
strong result was given by Schr\"{o}dinger \cite{schro} as
\begin{eqnarray}\label{heis2}
V_{\rho}(A)V_{\rho}(B) -|{\rm Re} ({\rm Cov}_{\rho}(A,B))|^2\geq \frac{1}{4} |{\rm Tr}(\rho [A,B])|^2,
\end{eqnarray}
where $[A,B]:=AB-BA$ is the commutator of $A,B$ and the classical covariance is defined by ${\rm Cov_\rho}(A) := {\rm Tr}(\rho AB) -{\rm Tr}(\rho A) {\rm Tr}(\rho B)$.

Yanagi et al. \cite{yangi} defined the one-parameter correlation and the one-parameter Wigner--Yanase skew information (is known as the Wigner--Yanase--Dyson skew information; cf. \cite{lib}) for operators $A,B$, respectively, as follows
\begin{eqnarray*}
{\rm Corr}_{\rho}^{\alpha}(A,B):={\rm Tr}(\rho A^*B)-{\rm Tr}(\rho^{1-\alpha} A^*\rho^{ \alpha}B) \mbox{\quad and \quad} I_{\rho}^{\alpha}(A):={\rm Corr}_{\rho}^{\alpha}(A,A),
\end{eqnarray*}
where $\alpha \in [0,1]$. They showed a trace inequality representing the relation between these two quantities as
\begin{eqnarray}\label{semic}
\left| {\rm Re(Corr}_\rho^{\alpha} (A,B))\right|^2 \leq I_{\rho}^\alpha (A)I_{\rho}^\alpha (B).
\end{eqnarray}
In the case that $\alpha =\frac{1}{2}$, we get the classical notions of the correlation ${\rm Corr}_{\rho}(A,B)$ and the Wigner--Yanase skew information $I_{\rho}(A)$. The classical Wigner--Yanase skew information represents a for non-commutativity between a quantum state $\rho$ and an observable $A$.\\
Luo \cite{luo} introduced the quantity $U_\rho(A)$ as a measure of uncertainty by
\begin{eqnarray*}
U_\rho(A)=\sqrt{V_\rho(A)^2-(V_\rho(A)-I_\rho(A))^2}.
\end{eqnarray*}
He then showed a Heisenberg-type uncertainty relation on $U_{\rho}(A)$ as
\begin{eqnarray}\label{skhies}
U_{\rho}(A)U_{\rho}(B)\geq \frac{1}{4} |{\rm Tr}(\rho [A,B])|^2.
\end{eqnarray}
These inequalities was studied and extended by a number of mathematicians. For further information we refer interested readers to \cite{fried, gib, yoo, petz}.

Let $\mathbb{B}(\mathcal{H})$ denote the $C^*$-algebra of all bounded linear operators on a complex Hilbert space $(\mathcal H, \langle\cdot,\cdot\rangle)$ with the unit $I$. If $\mathcal{H}=\mathbb{C}^n$, we identify $\mathbb{B}(\mathbb{C}^n)$ with the matrix algebra of $n\times n$ complex matrices $M_n(\mathbb{C})$. We consider the usual L\"{o}wner order $\leq$ on the real space of self-adjoint operators. Throughout the paper, a capital letter means an operator in $\mathbb{B}(\mathcal{H})$. An operator A is said to be strictly positive (denoted by $ A >0$) if it is a positive invertible operator. According to the Gelfand--Naimark--Segal theorem, every $C^*$-algebra can be regarded as a $C^*$-subalgebra of $\mathbb{B}(\mathcal{H})$ for some Hilbert space $\mathcal{H}$. We use $\mathcal{A},\mathcal{B}, \cdots $ to denote $C^*$-algebras. We denote by ${\rm Re}(A)$ and ${\rm Im}(A)$ the real and imaginary parts of $A$, respectively, so we may consider elements of $\mathcal{A}$ as Hilbert space operators. The geometric mean is defined by $A\sharp B=A^\frac{1}{2}\left(A^{-\frac{1}{2}}BA^{-\frac{1}{2}}\right)^\frac{1}{2}A^\frac{1}{2}$ for operators $A>0$ and $B\geq 0$. A $W^*$-algebra is a $*$-algebra of bounded operators on a Hilbert space that is closed in the weak operator topology and contains the identity operator. The $C^*$-algebra of complex valued continuous functions on the compact Hausdorff space $\Omega$ is denoted by $C(\Omega)$.\\
A linear map $\Phi :\mathcal{A}\longrightarrow \mathcal{B}$ between $C^*$-algebras is said to be $*$-linear if $\Phi(A^*)=\Phi(A)^*$. It is positive if $\Phi(A)\geq 0$ whenever $A \geq 0$. It is called strictly positive if $A>0$, then $\Phi(A)>0$.
We say that $\Phi$ is unital if $\mathcal{A}, \mathcal{B}$ are unital and $\Phi$ preserves the unit. A linear map $\Phi$ is called $n$-positive if the map $\Phi_n : M_n(\mathcal{A})\longrightarrow M_n(\mathcal{B})$ defined by $\Phi_n([a_{ij}]) = [\Phi(a_{ij})]$ is positive, where $M_n(\mathcal{A})$ stands for the $C^*$-algebra of
$n \times n$ matrices with entries in $\mathcal{A}$. A map $\Phi$ is said to be completely positive if it is $n$-positive for every $n\in \mathbb{N}$. According to \cite[Theorem 1.2.4]{stormer} if the range of the positive linear map $\Phi$ is commutative, then $\Phi$ is completely positive. It is known (see, e.g., \cite{mond}) that if $\Phi$ is a unital positive linear map, then
\begin{eqnarray}\label{meanphi}
\Phi(A\sharp B)\leq \Phi(A)\sharp \Phi(B).
\end{eqnarray}

A map $\Phi$ is called tracial if $\Phi(AB)=\Phi(BA)$. The usual trace on the trace class operators acting on a Hilbert space is a tracial positive linear functional. For a given closed two sided ideal $\mathcal{I}$ of a $C^*$-algebra $\mathcal{A}$, the existence of a tracial positive linear map $\Phi: \mathcal{A}\to \mathcal{A}$ satisfying $\Phi(\Phi(A))=\Phi(A)$ and $\Phi(A)-A\in\mathcal{I}$ is equivalent to the commutativity of the quotient $\mathcal{A}/\mathcal{I}$; see \cite{choi} for more examples and implications of the definition. For a tracial positive linear map $\Phi$, a positive operator $\rho \in \mathcal{A}$ is said to be $\Phi$-density if $\Phi(\rho)=I$. A unital $C^*$-algebra $\mathcal{B}$ is said to be injective whenever for every unital $C^*$-algebra $\mathcal{A}$ and for every self-adjoint subspace $S$ of $\mathcal{A}$, each unital completely positive linear map from $S$ into $\mathcal{B}$, can be extended to a completely positive linear map from $\mathcal{A}$ into $\mathcal{B}$.
Our investigation is based on the following definition.
\begin{definition}\label{def}
Let $\Phi:\mathcal{A}\longrightarrow \mathcal{B}$ be a tracial positive linear map and $\rho$ be a $\Phi$-density operator. Then
\begin{eqnarray*}
{\rm Cov_{\rho,\Phi}}(A,B):=\Phi(\rho A^*B)-\Phi(\rho A^*)\Phi(\rho B)\ {\rm and} \ V_{\rho,\Phi}(A):={\rm Cov}_{\rho,\Phi}(A,A),
\end{eqnarray*}
are called the generalized covariance and the generalized variance $A,B$, respectively. Further, the generalized correlation and the generalized Wigner--Yanase--Dyson skew information of two operators $A,B$ are defined by
\begin{eqnarray*}
{\rm Corr}_{\rho,\Phi}^{\alpha}(A,B):=\Phi(\rho A^*B)-\Phi(\rho^{ 1-\alpha} A^*\rho^\alpha B) \mbox{\quad and \quad} I_{\rho,\Phi}^{\alpha}(A):={\rm Corr}_{\rho,\Phi}^{\alpha}(A,A)\,,
\end{eqnarray*}
respectively.
\end{definition}
It is known that for every tracial positive linear map, the matrix
\begin{eqnarray*}
 \begin{bmatrix} V_{\rho,\Phi}(A) & {\rm Cov}_{\rho,\Phi}(B,A)\\ {\rm Cov}_{\rho,\Phi}(A,B) & V_{\rho,\Phi}(B)\end{bmatrix}
\end{eqnarray*}
is positive, which is equivalent to
\begin{eqnarray}
V_{\rho,\Phi}(A)\geq {\rm Cov}_{\rho,\Phi}(B,A) (V_{\rho,\Phi}(B))^{-1} {\rm Cov}_{\rho,\Phi}(A,B),
\end{eqnarray}
which is called the variance-covariance inequality; see \cite{mo1, MM} for technical discussions.

If $\mathcal{A}$ is a $C^*$-algebra and $\mathcal{B}$ is a $C^*$-subalgebra of $\mathcal{A}$, then a conditional expectation $\mathcal{E} : \mathcal{A}\longrightarrow \mathcal{B}$ is a positive contractive linear map such that $\mathcal{E}(BAC)=B\mathcal{E}(A)C$ for every $A\in\mathcal{A}$ and all $B,C \in \mathcal{B}$.\\
If $(\mathcal{X},\langle \cdot ,\cdot \rangle)$ is a semi-inner product module over a $C^*$-algebra $\mathcal{A}$, then
the Cauchy--Schwarz inequality for $x,y\in \mathcal{X}$ asserts that (see \cite{lanc, ABFM})
 \begin{eqnarray*}
 \langle x,y\rangle \langle y,x\rangle\leq \|\langle y,y\rangle \| \langle x,x\rangle .
\end{eqnarray*}
 If $\langle y,y\rangle \in \mathcal{Z}(\mathcal{A})$, where $\mathcal{Z}(\mathcal{A})$ is the center of the $C^*$-algebra $\mathcal{A}$, then the latter inequality turns into (see \cite{il1})
 \begin{eqnarray}\label{cuachysharp}
 \langle x,y\rangle \langle y,x\rangle\leq \langle y,y\rangle \langle x,x\rangle .
\end{eqnarray}

In Section 1, we use some techniques in the non-commutative setting to give some generalizations of inequalities (\ref{heis2}) and (\ref{heis1}) for tracial positive linear maps between $C^*$-algebras. More precisely, for a tracial positive linear map $\Phi$ between $C^*$-algebras under ceratin conditions we show that
\begin{eqnarray*}
V_{\rho,\Phi}(A)\sharp V_{\rho,\Phi}(B)\geq \frac{1}{2\sqrt{K_{m,M}(\rho[A,B])}} \left|\Phi(\rho [A,B])\right|
\end{eqnarray*}
for every self adjoint operators $A,B$. Section 2 deals with generalizations of inequalities (\ref{semic}) and (\ref{skhies}) for conditional expectation maps. Among other things, we prove that
\begin{eqnarray*}
0\leq I_{\rho,\Phi}^{\alpha}(A)\leq I_{\rho,\Phi}(A) \leq V_{\rho,\Phi}(A).
\end{eqnarray*}
 for every self-adjoint operator $A$. In addition, we generalize some significant inequalities for trace in the quantum mechanical systems to inequalities for tracial positive linear maps between $C^*$ -algebras. We indeed use some arguments differing from the classical theory to present some inequalities related to the generalized correlation and the generalized Wigner--Yanase--Dyson skew information.

\section{Inequalities for generalized covariance and variance}

We start this section by giving a generalization of inequality (\ref{heis2}). In fact we prove inequality (\ref{heis2}) for a tracial positive linear map between $C^*$-algebras under some mild conditions. We need the following notions slightly differing from the notions defined in Definition \ref{def}.
\begin{definition}
For a tracial positive linear map $\Phi$ from a $C^*$-algebra $\mathcal{A}$ into a $C^*$-algebra $\mathcal{B}$ and positive operator $\rho \in \mathcal{A}$ and for operators $ A,B \in \mathcal{A}$ we set
\begin{eqnarray*}
{\rm Cov_{\rho,\Phi}'}(A,B)=\Phi(\rho A^*B)-\Phi(\rho A^*)\Phi(\rho)^{-1}\Phi(\rho B)\ {\rm and} \ V_{\rho,\Phi}'(A)=\rm Cov_{\rho,\Phi}'(A,A).
\end{eqnarray*}
\end{definition}

To achieve our result we need the following lemma.

\begin{lemma}\label{matrixp}\cite[Lemma 2.1]{choi}
Let $A>0, B\geq 0$ be two operators in $\mathcal{A}$. Then the block matrix $\begin{bmatrix}A& X\\ X^*& B \end{bmatrix}$ is positive if and only if $B\geq X^* A^{-1} X$.
\end{lemma}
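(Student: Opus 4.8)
The plan is to establish the equivalence by means of a congruence (LDL$^*$-type) factorization of the block matrix, which reduces the positivity question to that of a block-diagonal operator. Since $A>0$, the element $A^{-1}$ exists in $\mathcal A$, and I would introduce the Schur complement $S:=B-X^*A^{-1}X$ together with the lower-triangular operator
\begin{eqnarray*}
L=\begin{bmatrix} I & 0 \\ X^*A^{-1} & I \end{bmatrix}\in M_2(\mathcal A),
\end{eqnarray*}
whose adjoint is $L^*=\begin{bmatrix} I & A^{-1}X \\ 0 & I \end{bmatrix}$, because $A^{-1}$ is self-adjoint and $(X^*A^{-1})^*=A^{-1}X$.

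First I would verify by direct multiplication the identity
\begin{eqnarray*}
\begin{bmatrix} A & X \\ X^* & B \end{bmatrix} = L \begin{bmatrix} A & 0 \\ 0 & S \end{bmatrix} L^*.
\end{eqnarray*}
This is a routine computation: expanding the right-hand side reproduces the off-diagonal entries $X$ and $X^*$ and, in the lower-right corner, the quantity $X^*A^{-1}X+S=B$. The key structural feature is that $L$ is invertible in $M_2(\mathcal A)$, with inverse $\begin{bmatrix} I & 0 \\ -X^*A^{-1} & I \end{bmatrix}$, since it is lower triangular with the identity on the diagonal.

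The conclusion then follows from the fact that congruence by an invertible element preserves positivity in a $C^*$-algebra: for $P\in M_2(\mathcal A)$ and any element $C$, one has $C^*PC\geq 0$ whenever $P\geq 0$. Applying this once with $C=L^*$ and once with $C=(L^{-1})^*$ shows that the original block matrix is positive if and only if $\begin{bmatrix} A & 0 \\ 0 & S \end{bmatrix}$ is positive. A block-diagonal operator is positive exactly when each diagonal block is, and since $A>0$ holds by hypothesis, this reduces precisely to $S\geq 0$, that is, $B\geq X^*A^{-1}X$, as claimed. I do not anticipate a serious obstacle here; the argument is purely algebraic and stays entirely within $M_2(\mathcal A)$, so no passage to a Hilbert-space representation is required. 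The only point needing mild care is the two-sided justification that positivity is preserved under congruence by $L$ and by $L^{-1}$, which is exactly the symmetric use of the relation $C^*PC\geq 0$ noted above.
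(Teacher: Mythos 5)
Your proof is correct: the congruence identity $\begin{bmatrix}A& X\\ X^*& B\end{bmatrix}=L\begin{bmatrix}A& 0\\ 0& S\end{bmatrix}L^*$ with $S=B-X^*A^{-1}X$ and $L$ invertible reduces the question to the positivity of the block-diagonal operator, and both implications follow because congruence by an invertible element both preserves and reflects positivity. The paper itself quotes this lemma from Choi--Tsui without reproducing a proof, and your Schur-complement argument is exactly the standard one behind the cited result, so there is nothing to add beyond the (harmless) remark that $A>0$ presupposes a unit, so $L$ lives in $M_2(\mathcal A)$ (or its unitization).
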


We are ready to prove our first result.

\begin{theorem}\label{heiscom}
Let $\Phi :\mathcal{A} \longrightarrow \mathcal{B}$ be a tracial positive linear map between $C^*$-algebras and $\rho \in \mathcal{A}$ be a positive operator such that $\Phi(\rho)>0$. If $\Phi(\mathcal{A})$ is a commutative subspace of $\mathcal{B}$, then
\begin{eqnarray*}
V_{\rho,\Phi}'(A)V_{\rho,\Phi}'(B) -|{\rm Re} ({\rm Cov}_{\rho,\Phi}'(A,B))|^2\geq \frac{1}{4} |\Phi(\rho [A,B])|^2
\end{eqnarray*}
for all self-adjoint operators $A,B$.
In particular,
\begin{eqnarray*}
V_{\rho,\Phi}'(A)V_{\rho,\Phi}'(B) \geq \frac{1}{4} |\Phi(\rho [A,B])|^2.
\end{eqnarray*}
\end{theorem}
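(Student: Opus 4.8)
The plan is to realize the primed covariance as the Schur complement of a single positive Gram matrix, and then to exploit the commutativity of $\Phi(\mathcal{A})$ twice: once to collapse the noncommutative Cauchy--Schwarz bound into its scalar form, and once to split $|{\rm Cov}_{\rho,\Phi}'(A,B)|^2$ into real and imaginary parts and identify the imaginary part with the commutator term.

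First I would record that, since $\Phi(\mathcal{A})$ is commutative, $\Phi$ is completely positive by \cite[Theorem 1.2.4]{stormer}, so $\Phi_3$ is positive. Applying $\Phi_3$ to the positive element $T^*T$, where $T\in M_3(\mathcal{A})$ has first row $(\rho^{1/2},\,A\rho^{1/2},\,B\rho^{1/2})$ and zero entries otherwise, and simplifying each entry by traciality through the identity $\Phi(\rho^{1/2}X\rho^{1/2})=\Phi(\rho X)$, yields
\begin{eqnarray*}
\begin{bmatrix} \Phi(\rho) & \Phi(\rho A) & \Phi(\rho B)\\ \Phi(\rho A) & \Phi(\rho A^2) & \Phi(\rho AB)\\ \Phi(\rho B) & \Phi(\rho BA) & \Phi(\rho B^2)\end{bmatrix}\geq 0.
\end{eqnarray*}
Because $\Phi(\rho)>0$, partitioning off the first row and column and invoking Lemma \ref{matrixp} (a generalized Schur complement) gives
\begin{eqnarray*}
\begin{bmatrix} V_{\rho,\Phi}'(A) & {\rm Cov}_{\rho,\Phi}'(A,B)\\ {\rm Cov}_{\rho,\Phi}'(B,A) & V_{\rho,\Phi}'(B)\end{bmatrix}\geq 0.
\end{eqnarray*}
In particular $V_{\rho,\Phi}'(A),V_{\rho,\Phi}'(B)\geq 0$, and since all entries lie in the commutative $C^*$-algebra generated by $\Phi(\mathcal{A})$ (which also contains $\Phi(\rho)^{-1}$, as the latter commutes with every $\Phi(\rho X)$), passing to scalars via the Gelfand transform and reading off the determinant of the resulting pointwise positive $2\times 2$ matrix yields
\begin{eqnarray*}
V_{\rho,\Phi}'(A)\,V_{\rho,\Phi}'(B)\geq {\rm Cov}_{\rho,\Phi}'(B,A)\,{\rm Cov}_{\rho,\Phi}'(A,B)=\left|{\rm Cov}_{\rho,\Phi}'(A,B)\right|^2,
\end{eqnarray*}
where the last identity uses $({\rm Cov}_{\rho,\Phi}'(A,B))^*={\rm Cov}_{\rho,\Phi}'(B,A)$, itself a consequence of traciality together with the self-adjointness of $A,B,\rho$.

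Next I would decompose the covariance into self-adjoint parts. Writing ${\rm Cov}_{\rho,\Phi}'(A,B)={\rm Re}({\rm Cov}_{\rho,\Phi}'(A,B))+i\,{\rm Im}({\rm Cov}_{\rho,\Phi}'(A,B))$ with both parts self-adjoint, commutativity of $\Phi(\mathcal{A})$ forces these two parts to commute, so that $|{\rm Cov}_{\rho,\Phi}'(A,B)|^2=|{\rm Re}({\rm Cov}_{\rho,\Phi}'(A,B))|^2+|{\rm Im}({\rm Cov}_{\rho,\Phi}'(A,B))|^2$. Here commutativity pays off a second time: in
\begin{eqnarray*}
{\rm Im}({\rm Cov}_{\rho,\Phi}'(A,B))=\frac{1}{2i}\left({\rm Cov}_{\rho,\Phi}'(A,B)-{\rm Cov}_{\rho,\Phi}'(B,A)\right),
\end{eqnarray*}
the correction terms $\Phi(\rho A)\Phi(\rho)^{-1}\Phi(\rho B)$ and $\Phi(\rho B)\Phi(\rho)^{-1}\Phi(\rho A)$ cancel, leaving ${\rm Im}({\rm Cov}_{\rho,\Phi}'(A,B))=\frac{1}{2i}\Phi(\rho[A,B])$. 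Since traciality makes $\Phi(\rho[A,B])$ skew-adjoint, one gets $|{\rm Im}({\rm Cov}_{\rho,\Phi}'(A,B))|^2=\frac14|\Phi(\rho[A,B])|^2$. Substituting the splitting into the Cauchy--Schwarz bound gives $V_{\rho,\Phi}'(A)V_{\rho,\Phi}'(B)-|{\rm Re}({\rm Cov}_{\rho,\Phi}'(A,B))|^2\geq \frac14|\Phi(\rho[A,B])|^2$, and discarding the nonnegative real-part term yields the ``in particular'' inequality.

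I expect the main obstacle to be orchestrating the commutativity hypothesis so that it simultaneously delivers complete positivity (hence the Gram matrix positivity), collapses the operator Cauchy--Schwarz into its scalar determinant form, and legitimizes the clean real/imaginary decomposition. The key technical point to verify carefully is that $\Phi(\rho)^{-1}$ lies in the commutative $C^*$-algebra generated by $\Phi(\mathcal{A})$, since this is what justifies both the determinant step and the cancellation of the correction terms in the imaginary part.
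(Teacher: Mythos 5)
Your proposal is correct and takes essentially the same route as the paper's proof: both build the $3\times 3$ Gram matrix, invoke complete positivity of $\Phi$ (via the commutative range), apply the Schur-complement lemma twice to reach $V_{\rho,\Phi}'(A)V_{\rho,\Phi}'(B)\geq |{\rm Cov}_{\rho,\Phi}'(A,B)|^2$, and then split ${\rm Cov}_{\rho,\Phi}'(A,B)$ into its self-adjoint real part and the skew-adjoint part $\tfrac{1}{2}\Phi(\rho[A,B])$ using commutativity to cancel the correction terms. The only differences (placement of $\rho$ in the Gram matrix, the explicit Gelfand-transform justification of the determinant step) are cosmetic.
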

\begin{proof}
A simple calculation shows that
\begin{eqnarray*}
{\rm Cov}_{\rho,\Phi}'(A,B)-{\rm Cov}_{\rho,\Phi}'(B,A)&=& \Phi(\rho AB)- \Phi(\rho A)\Phi(\rho)^{-1}\Phi(\rho B)\\ && - \Phi(\rho BA)+\Phi(\rho B)\Phi(\rho)^{-1}\Phi(\rho A)\\ &=&\Phi(\rho[A,B]) \ \ ({\rm since,\ \Phi(\mathcal{A})\ is\ commutative})
\end{eqnarray*}
and
\begin{eqnarray*}
{\rm Cov}_{\rho,\Psi}'(A,B)+{\rm Cov}_{\rho,\Phi}'(B,A)&=&{\rm Cov}_{\rho,\Psi}'(A,B)+\left({\rm Cov}_{\rho,\Phi}'(A,B)\right)^*\\ &=&2{\rm Re}({\rm Cov}_{\rho,\Phi}'(A,B)).
\end{eqnarray*}
Summing both sides of the above inequalities, we get
\begin{eqnarray*}
2{\rm Cov}_{\rho,\Phi}'(A,B) =\Phi(\rho[A,B])+2{\rm Re}({\rm Cov}_{\rho,\Phi}'(A,B)).
\end{eqnarray*}
Since $\Phi(\rho[A,B])^* =-\Phi(\rho[A,B])$ and $2{\rm Re}({\rm Cov}_{\rho\Phi}'(A,B))$ is self-adjoint, it follows from the commutativity of $\Phi(\mathcal{A})$ that
\begin{eqnarray}\label{rabete}
|{\rm Cov}_{\rho,\Phi}'(A,B)|^2=|{\rm Re} ({\rm Cov}_{\rho,\Phi}'(A,B))|^2+ \frac{1}{4} |\Phi(\rho [A,B])|^2.
\end{eqnarray}
Moreover,
\begin{eqnarray*}
\begin{bmatrix} \rho^\frac{1}{2} A^*A\rho^\frac{1}{2} & \rho^\frac{1}{2}A^*B \rho^\frac{1}{2}& \rho^\frac{1}{2} A^* \rho^\frac{1}{2} \\ \rho^\frac{1}{2}B^*A \rho^\frac{1}{2} & \rho^\frac{1}{2}B^*B \rho^\frac{1}{2} & \rho^\frac{1}{2}B^* \rho^\frac{1}{2}\\ \rho^\frac{1}{2}A \rho^\frac{1}{2} & \rho^\frac{1}{2}B \rho^\frac{1}{2}& \rho
\end{bmatrix}= \begin{bmatrix} \rho^\frac{1}{2}A^*&0&0 \\ \rho^\frac{1}{2}B^*&0&0 \\ \rho^\frac{1}{2} &0&0 \end{bmatrix}\begin{bmatrix} A \rho^\frac{1}{2}&B \rho^\frac{1}{2}& \rho^\frac{1}{2}\\ 0&0&0 \\ 0&0&0 \end{bmatrix}\geq 0.
\end{eqnarray*}
It follows from the complete positivity (and indeed the 3-positivity) and the tracial property of $\Phi$ that
 \begin{eqnarray*}
\begin{bmatrix} \Phi(\rho A^*A) & \Phi(\rho A^*B) & \Phi(\rho A^*) \\ \Phi(\rho B^*A) &\Phi(\rho B^*B)& \Phi(\rho B^*) \\ \Phi(\rho A)& \Phi(\rho B) & \Phi(\rho)
\end{bmatrix} \geq 0.
\end{eqnarray*}
Hence, by applying Lemma \ref{matrixp}, we have
\begin{eqnarray*}
\begin{bmatrix} \Phi(\rho A^*A) & \Phi(\rho A^*B) \\ \Phi(\rho B^*A) & \Phi(\rho B^*B)
\end{bmatrix} \geq \begin{bmatrix} \Phi(\rho A)^* & 0 \\ \Phi(\rho B)^* & 0\end{bmatrix} \begin{bmatrix} \Phi(\rho)^{-1} & 0\\ 0 & 0
\end{bmatrix} \begin{bmatrix} \Phi(\rho A) & \Phi(\rho B) \\ 0 & 0\end{bmatrix},
\end{eqnarray*}
whence
\begin{eqnarray*}
\begin{bmatrix} \Phi(\rho A^*A) & \Phi(\rho A^*B) \\ \Phi(\rho B^*A) & \Phi(\rho B^*B)
\end{bmatrix} \geq \begin{bmatrix} \Phi(\rho A)^* \Phi(\rho)^{-1}\Phi(\rho A) & \Phi(\rho A)^* \Phi(\rho)^{-1}\Phi(\rho B) \\ \Phi(\rho B)^* \Phi(\rho)^{-1}\Phi(\rho A) & \Phi(\rho B)^* \Phi(\rho)^{-1}\Phi(\rho B)\end{bmatrix},
\end{eqnarray*}
or equivalently ,
\begin{eqnarray*}
 \begin{bmatrix} V_{\rho,\Phi}'(A) & {\rm Cov}_{\rho,\Phi}'(B,A)\\ {\rm Cov}_{\rho,\Phi}'(A,B) & V_{\rho,\Phi}'(B)\end{bmatrix}\geq 0.
\end{eqnarray*}
It follows from Lemma \ref{matrixp} that
\begin{eqnarray*}
 V_{\rho,\Phi}'(A)\geq {\rm Cov}_{\rho,\Phi}'(A,B)^* \left( V_{\rho,\Phi}'(B) \right)^{-1} {\rm Cov}_{\rho,\Phi}'(A,B).
\end{eqnarray*}
Applying the commutativity $\Phi(\mathcal{A})$, we get
\begin{eqnarray*}
 V_{\rho,\Phi}'(A) V_{\rho,\Phi}'(B)\geq|{\rm Cov}_{\rho,\Phi}'(A,B)|^2.
\end{eqnarray*}
Consequently, if $A,B$ are self-adjoint operators, then
\begin{align*}
\big(\Phi(\rho A^2)-\Phi(\rho A)^2\Phi(\rho)^{-1}\big)&\big( \Phi(\rho B^2)-\Phi(\rho B)^2\Phi(\rho)^{-1}\big) \\ &=V_{\rho,\Phi}'(A) V_{\rho,\Phi}'(B)\\ &\geq |{\rm Re} ({\rm Cov}_{\rho,\Psi}'(A,B))|^2+ \frac{1}{4} |\Phi(\rho [A,B])|^2 \\ & \qquad \qquad \qquad ({\rm by \ equality \ (\ref{rabete}})).
\end{align*}
\end{proof}

\begin{corollary}\label{heiscomcor}
Let $\Phi :\mathcal{A} \longrightarrow \mathcal{B}$ be a tracial positive linear map between $C^*$-algebras and $\rho \in \mathcal{A}$ be a $\Phi$-density operator. If $\Phi(\mathcal{A})$ is a commutative subspace of $\mathcal{B}$, then
\begin{eqnarray*}
V_{\rho,\Phi}(A)V_{\rho,\Phi}(B) -|{\rm Re} ({\rm Cov}_{\rho,\Phi}(A,B))|^2\geq \frac{1}{4} |\Phi(\rho [A,B])|^2
\end{eqnarray*}
for all self-adjoint operators $A,B$.
\end{corollary}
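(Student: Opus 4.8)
The plan is to recognize that this corollary is an immediate specialization of Theorem \ref{heiscom}, obtained by imposing the normalization $\Phi(\rho)=I$ that characterizes a $\Phi$-density operator. The crucial point is that once $\rho$ is a $\Phi$-density operator, the primed quantities of the preceding definition collapse onto the unprimed ones from Definition \ref{def}, so the conclusion of the theorem transfers verbatim.

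First I would verify that the hypotheses of Theorem \ref{heiscom} are met. By definition a $\Phi$-density operator $\rho$ is positive and satisfies $\Phi(\rho)=I$; in particular $\Phi(\rho)=I>0$, so the standing assumption $\Phi(\rho)>0$ holds, and the commutativity of $\Phi(\mathcal{A})$ is assumed outright. Next I would compare the two families of covariances. Since $\Phi(\rho)=I$ we have $\Phi(\rho)^{-1}=I$, and therefore the subtracted term in
\[
{\rm Cov}_{\rho,\Phi}'(A,B)=\Phi(\rho A^*B)-\Phi(\rho A^*)\Phi(\rho)^{-1}\Phi(\rho B)
\]
becomes simply $\Phi(\rho A^*)\Phi(\rho B)$, which is precisely the subtracted term in ${\rm Cov}_{\rho,\Phi}(A,B)$. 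This yields the identifications ${\rm Cov}_{\rho,\Phi}'(A,B)={\rm Cov}_{\rho,\Phi}(A,B)$ and, taking $B=A$, $V_{\rho,\Phi}'(A)=V_{\rho,\Phi}(A)$ for all operators $A,B\in\mathcal{A}$.

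With these identifications in hand, I would simply invoke Theorem \ref{heiscom} for the same $\Phi$ and $\rho$ applied to self-adjoint $A,B$. The theorem furnishes
\[
V_{\rho,\Phi}'(A)V_{\rho,\Phi}'(B)-|{\rm Re}({\rm Cov}_{\rho,\Phi}'(A,B))|^2\geq \tfrac{1}{4}|\Phi(\rho[A,B])|^2,
\]
and substituting the equalities $V_{\rho,\Phi}'(X)=V_{\rho,\Phi}(X)$ and ${\rm Cov}_{\rho,\Phi}'(A,B)={\rm Cov}_{\rho,\Phi}(A,B)$ immediately gives the asserted inequality.

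There is essentially no obstacle in this argument; the entire content is the observation that the density normalization $\Phi(\rho)=I$ removes the $\Phi(\rho)^{-1}$ factor. The only step deserving a moment's care is confirming that the subtracted product terms genuinely match the expressions in Definition \ref{def} after this substitution, which is a routine check.
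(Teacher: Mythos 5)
Your proposal is correct and follows exactly the paper's own argument: the paper likewise observes that a $\Phi$-density operator satisfies $\Phi(\rho)^{-1}=I$, so the primed covariance and variance reduce to the unprimed ones and Theorem \ref{heiscom} applies verbatim. Your write-up is simply a more detailed rendering of that same one-line specialization.
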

\begin{proof}
Obviously, if $\rho$ is a $\Phi$-density operator, then $\Phi(\rho)^{-1}=I$. Now Theorem \ref{heiscom} yields the required inequality.
\end{proof}

Let $\mathcal{A}$ be a $C^*$-algebra and $\mathcal{B}$ be a $C^*$-subalgebra of $\mathcal{A}$. If $\mathcal{E}:\mathcal{A}\longrightarrow \mathcal{B}$ is a tracial conditional expectation, then
\begin{eqnarray}\label{concom}
B \mathcal{E}(A) = \mathcal{E}(BA)=\mathcal{E}(AB)= \mathcal{E}(A)B
\end{eqnarray}
for every $A\in \mathcal{A}$ and $B\in \mathcal{B}$. Using this fact we give the following corollary.
\begin{corollary}
Let $\mathcal{A}$ be a $C^*$-algebra and $\mathcal{B}$ be a $C^*$-subalgebra of $\mathcal{A}$. If $\mathcal{E}:\mathcal{A}\longrightarrow \mathcal{B}$ is a tracial conditional expectation, then
\begin{eqnarray*}
V_{\rho,\mathcal{E}}(A)V_{\rho,\mathcal{E}}(B) -|{\rm Re} ({\rm Cov}_{\rho,\mathcal{E}}(A,B))|^2\geq \frac{1}{4} |\mathcal{E}(\rho [A,B])|^2
\end{eqnarray*}
for all self-adjoint operators $A,B \in \mathcal{A}$ and each $\mathcal{E}$-density operator $\rho \in \mathcal{A}$.
\end{corollary}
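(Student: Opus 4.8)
The plan is to obtain this statement as an immediate specialization of Corollary \ref{heiscomcor}, taking $\Phi=\mathcal{E}$; the entire task reduces to verifying the three hypotheses of that corollary for a tracial conditional expectation. First I would observe that a conditional expectation is positive by definition, and since $\mathcal{E}$ is assumed tracial, it is indeed a tracial positive linear map between the $C^*$-algebras $\mathcal{A}$ and $\mathcal{B}$. Next, because $\rho$ is an $\mathcal{E}$-density operator we have $\mathcal{E}(\rho)=I$, which is precisely the $\Phi$-density requirement demanded in Corollary \ref{heiscomcor}.

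The only genuinely substantive point is the commutativity of the range $\mathcal{E}(\mathcal{A})$, and this is exactly what relation (\ref{concom}) delivers. That relation asserts $B\,\mathcal{E}(A)=\mathcal{E}(A)\,B$ for every $A\in\mathcal{A}$ and every $B\in\mathcal{B}$. Since $\mathcal{E}$ maps into $\mathcal{B}$, every element of the range $\mathcal{E}(\mathcal{A})$ is an admissible choice of $B$; thus, applying (\ref{concom}) with $B=\mathcal{E}(A')$ for arbitrary $A,A'\in\mathcal{A}$, one obtains $\mathcal{E}(A')\mathcal{E}(A)=\mathcal{E}(A)\mathcal{E}(A')$, so $\mathcal{E}(\mathcal{A})$ is a commutative subspace of $\mathcal{B}$.

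With all three hypotheses in hand, I would simply invoke Corollary \ref{heiscomcor} with $\Phi=\mathcal{E}$ to conclude that $V_{\rho,\mathcal{E}}(A)V_{\rho,\mathcal{E}}(B)-|{\rm Re}({\rm Cov}_{\rho,\mathcal{E}}(A,B))|^2\geq \frac{1}{4}\,|\mathcal{E}(\rho[A,B])|^2$ for all self-adjoint $A,B\in\mathcal{A}$. There is no real obstacle here: the argument is entirely a matter of recognizing that a tracial conditional expectation automatically has commutative range, a fact the authors have already packaged into (\ref{concom}). The only point worth a moment's care is that the bimodule identity $\mathcal{E}(BA)=B\,\mathcal{E}(A)$ underlying (\ref{concom}) presupposes $I\in\mathcal{B}$, which is part of the standing setup, so the reduction to Corollary \ref{heiscomcor} goes through without any additional work.
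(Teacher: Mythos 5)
Your proposal is correct and matches the paper's (implicit) argument exactly: the paper states the corollary immediately after relation (\ref{concom}) with the remark ``Using this fact,'' meaning precisely that (\ref{concom}) forces $\mathcal{E}(\mathcal{A})$ to be commutative, so Corollary \ref{heiscomcor} applies with $\Phi=\mathcal{E}$. Your verification of the remaining hypotheses (tracial positivity, $\mathcal{E}(\rho)=I$) is the same routine check the authors leave to the reader.
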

Now we give a version of Heisenberg's uncertainty relation, in the case that $\mathcal{B}$ is not a commutative $C^*$-algebra. To get this result we need some lemmas.

\begin{lemma}[Choi--Tsui]\label{choi} \cite[pp. 59-60]{choi}
Let $\mathcal{A},\mathcal{B}$ be $C^*$-algebras such that either one of them is $W^*$-algebra or $\mathcal{B}$ is an injective $C^*$-algebra. Let $\Phi:\mathcal{A}\longrightarrow \mathcal{B}$ be a tracial positive linear map. Then there exist a
commutative $C^*$-algebra $C(X)$ and tracial positive linear maps $\phi_1 : A\longrightarrow C(X) $ and $\phi_2: C(X)\longrightarrow \mathcal{B}$ such that $\Phi=\phi_2\circ \phi_1$. Moreover, in the case that $\Phi$ is unital, then
 $\phi_1$ and $\phi_2$ can be chosen to be unital. In particular, $\Phi$ is completely positive.
\end{lemma}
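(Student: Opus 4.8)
The plan is to separate the genuinely hard part, the construction of the commutative algebra $C(X)$ together with the two factoring maps, from the complete positivity, which I would treat as a formal corollary. Indeed, granting a factorization $\Phi=\phi_2\circ\phi_1$ through a commutative $C(X)$ by tracial positive maps, the map $\phi_1:\mathcal{A}\to C(X)$ has commutative range and is therefore completely positive by the quoted result \cite[Theorem 1.2.4]{stormer}, whereas $\phi_2:C(X)\to\mathcal{B}$ has commutative \emph{domain}, so every positive map out of it is automatically completely positive; composing yields the ``in particular'' clause. The unital case is just as formal, since $\Phi(1)=1$ forces $1\in C(X)$ and both $\phi_i$ to be unital. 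So the whole problem is to manufacture $\phi_1$ and $\phi_2$.

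First I would reduce to von Neumann algebras by passing to the biduals: $\Phi^{**}:\mathcal{A}^{**}\to\mathcal{B}^{**}$ is a normal positive map, and traciality persists because multiplication on the enveloping von Neumann algebra is separately normal while $\mathcal{A}$ is weak-$*$ dense. This is exactly where the hypotheses enter: when one of $\mathcal{A},\mathcal{B}$ is already a $W^*$-algebra the normal apparatus is in place from the outset, and when $\mathcal{B}$ is injective the extension and projection properties of $\mathcal{B}$ are what allow a conclusion obtained at the level of $\mathcal{B}^{**}$ to be transported back to $\mathcal{B}$ itself.

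The core of the argument, and the step I expect to be the main obstacle, is the construction of $C(X)$ through the center-valued trace. For each normal state $\omega$ of $\mathcal{B}^{**}$ the functional $\omega\circ\Phi^{**}$ is a normal \emph{trace} on $\mathcal{A}^{**}$; since a bounded normal trace annihilates the properly infinite part, $\Phi^{**}$ factors through the finite part $M$ of $\mathcal{A}^{**}$, which carries a normal center-valued trace $T:M\to Z(M)$ with the commutative range $Z(M)$. Because every normal trace on $M$ has the form $\phi_\omega\circ T$, one gets $\omega\circ\Phi^{**}=\phi_\omega\circ T$ for all $\omega$, whence $T(x)=0$ forces $\Phi^{**}(x)=0$; thus $\Phi^{**}$ descends through $T$ to a positive map $\psi:Z(M)\to\mathcal{B}^{**}$ with $\Phi^{**}=\psi\circ T$. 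Taking $\phi_1=T$ (commutative range) and $\phi_2=\psi$, and choosing $C(X)$ to be a suitable weak-$*$ dense commutative $C^*$-subalgebra of $Z(M)$, produces the desired factorization; the remaining labor is to realize this over the original $\mathcal{A}$ and $\mathcal{B}$ rather than their biduals, which is once more where the $W^*$/injectivity hypotheses are indispensable. I would emphasize that both naive shortcuts fail, which is precisely why this machinery is needed: one cannot simply factor through the maximal commutative quotient $\mathcal{A}/\mathcal{I}$ by the closed ideal $\mathcal{I}$ generated by commutators, because $\Phi([a,b])=0$ does not give $\Phi(c[a,b]d)=0$; and one cannot take $C(X)$ to be the range of $\Phi$, since a positive map out of a commutative algebra, such as $(x,y)\mapsto xP+yQ$ into $M_2$ with noncommuting projections $P,Q$, can already have a noncommutative range.
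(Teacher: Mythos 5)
The paper does not actually prove this lemma: it is quoted verbatim from Choi and Tsui \cite[pp.~59--60]{choi}, so there is no in-paper argument to compare against. Measured against the genuine Choi--Tsui proof, your sketch reconstructs the correct skeleton. The formal reductions are all right: $\phi_1$ is completely positive because its range is commutative, $\phi_2$ because its domain is, and unitality follows by evaluating at the identity. The core mechanism --- pass to the enveloping von Neumann algebra, note that traciality annihilates the properly infinite summand, factor through the normal center-valued trace $T$ on the finite summand $M$, and realize $C(X)$ inside the commutative algebra $Z(M)$ --- is exactly the mechanism of the cited theorem, and your remark that the two naive shortcuts (the commutator ideal, the range of $\Phi$) both fail is accurate and worth keeping.

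The genuine gap is the step you explicitly defer: transporting the bidual-level factorization $\Phi^{**}=\psi\circ T$ back to a factorization of $\Phi$ itself. The composite $\psi\circ T\circ\iota$ does take values in $\mathcal{B}$ on elements of $\mathcal{A}$, but $\psi$ on all of $Z(M)$ --- even on the $C^*$-subalgebra generated by $T(\iota(\mathcal{A}))$ --- need not land in $\mathcal{B}$, and this is precisely what the hypotheses are there to repair. When $\mathcal{B}$ is a $W^*$-algebra one composes $\psi$ with the normal conditional expectation $\mathcal{B}^{**}\to\mathcal{B}$ dual to the inclusion $\mathcal{B}_*\hookrightarrow\mathcal{B}^*$; when $\mathcal{B}$ is injective one uses instead a completely positive projection of $\mathbb{B}(\mathcal{H})$ onto $\mathcal{B}$; and when only $\mathcal{A}$ is a $W^*$-algebra one cannot assume $\Phi$ is normal, so the descent through $T$ needs a different justification (for instance that elements of $\ker T$ in a finite $W^*$-algebra are sums of commutators, on which any tracial map vanishes). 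Until one of these three routes is actually carried out the argument is incomplete, although your outline correctly isolates where each hypothesis must enter.
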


\begin{lemma}\label{revkadison}{\rm(Kadison's inequality)} \cite[Chapter 1]{mond} If $\Phi:\mathcal{A}\longrightarrow \mathcal{B}$ is a unital $2$-positive linear map between unital $C^*$-algebras, then
\begin{eqnarray*}
\Phi(|A|^2)\geq |\Phi(A)|^2
\end{eqnarray*}

for every $A\in \mathcal{A}$.
\end{lemma}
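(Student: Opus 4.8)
The plan is to obtain the inequality by applying the $2$-positivity of $\Phi$ to a single positive block matrix over $\mathcal{A}$ and then extracting the desired relation through the block-matrix criterion of Lemma \ref{matrixp}. As a preliminary I would note that any positive linear map between $C^*$-algebras is automatically $*$-linear, so that $\Phi(A^*)=\Phi(A)^*$; this lets me write $\Phi(|A|^2)=\Phi(A^*A)$ and $|\Phi(A)|^2=\Phi(A)^*\Phi(A)$ throughout, and in particular guarantees that the amplified matrices below are self-adjoint.

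The central step is to consider the $2\times 2$ block matrix
\[
\begin{bmatrix} A^*A & A^* \\ A & I \end{bmatrix}
=\begin{bmatrix} A^* \\ I \end{bmatrix}\begin{bmatrix} A & I \end{bmatrix}\geq 0,
\]
which is positive because it has the form $ww^*$ for the column $w=\begin{bmatrix} A^* \\ I \end{bmatrix}\in M_{2,1}(\mathcal{A})$. Applying the amplification $\Phi_2$, that is, invoking $2$-positivity together with unitality ($\Phi(I)=I$) and the $*$-linearity recorded above, I obtain
\[
\begin{bmatrix} \Phi(A^*A) & \Phi(A)^* \\ \Phi(A) & I \end{bmatrix}\geq 0.
\]
Conjugating by the coordinate-swap unitary $\begin{bmatrix} 0 & I \\ I & 0 \end{bmatrix}$ preserves positivity and moves the invertible corner into the top-left position, giving $\begin{bmatrix} I & \Phi(A) \\ \Phi(A)^* & \Phi(A^*A) \end{bmatrix}\geq 0$. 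Now Lemma \ref{matrixp}, applied with the strictly positive corner $I$, yields $\Phi(A^*A)\geq \Phi(A)^*I^{-1}\Phi(A)=\Phi(A)^*\Phi(A)=|\Phi(A)|^2$, which is exactly the assertion.

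I expect no deep obstacle here; the work is essentially bookkeeping. The points that need care are verifying that the initial block matrix factors as $ww^*$ so that its positivity is manifest, observing that only a $2\times 2$ amplification is used so that $2$-positivity (rather than full complete positivity) is indeed sufficient, and arranging the corners so that the hypothesis of Lemma \ref{matrixp} requiring a strictly positive top-left block is met, which is automatic here since that block equals the identity.
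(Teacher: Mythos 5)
Your proof is correct. The paper states this lemma purely as a citation to the Mond--Pe\v{c}ari\'{c} monograph and supplies no proof of its own, so there is no in-paper argument to compare against; what you give is the standard Choi/Kadison block-matrix proof of the Schwarz inequality for unital $2$-positive maps, and it is entirely consistent with the techniques the paper uses elsewhere (compare the $3\times 3$ positive block matrix in the proof of Theorem \ref{heiscom}). The only bookkeeping point worth making explicit is that to apply the amplification $\Phi_2$ you should realize the positivity inside $M_2(\mathcal{A})$ by padding the column with zeros, i.e. $\begin{bmatrix} A^*A & A^*\\ A & I\end{bmatrix}=WW^*$ with $W=\begin{bmatrix} A^* & 0\\ I & 0\end{bmatrix}\in M_2(\mathcal{A})$; with that, the swap-and-apply-Lemma~\ref{matrixp} step (or, just as easily, reading Lemma~\ref{matrixp} off the matrix $\begin{bmatrix} I & \Phi(A)\\ \Phi(A)^* & \Phi(A^*A)\end{bmatrix}$ directly) gives exactly $\Phi(|A|^2)\geq|\Phi(A)|^2$.
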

In the case that $A$ is a positive operator of $\mathcal{A}$ satisfying $0<mI\leq A\leq MI$ for some scalers $m<M$, by \cite[Theorem 1.32]{mond}, the reverse inequality
\begin{eqnarray}\label{revk}
\Phi(A^2)\leq \dfrac{(M+m)^2}{4Mm}\Phi(A)^2
\end{eqnarray}
holds.

\begin{lemma}
Let $\Phi:\mathcal{A}\longrightarrow \mathcal{B}$ be a unital $2$-positive linear map between unital $C^*$-algebras. If $A$ is an operator of $\mathcal{A}$ satisfying ${\rm sp}(A) \subseteq [m,M]\cup [-M,-m]$ for some scalers $0<m<M$, then
\begin{eqnarray}\label{revkadison1}
|\Phi(A)|\leq \sqrt{\dfrac{(M+m)^2}{4Mm}}\Phi(|A|).
\end{eqnarray}
\end{lemma}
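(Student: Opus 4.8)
The plan is to reduce the stated inequality to a squared version that can be handled by combining Kadison's inequality with its reverse, and then to recover the unsquared form by invoking the operator monotonicity of the square root. Throughout I take $A$ to be self-adjoint, as is forced (under the usual convention) by the hypothesis ${\rm sp}(A)\subseteq[m,M]\cup[-M,-m]\subseteq\mathbb{R}$; this is precisely the situation in which the lemma will later be applied, namely to the self-adjoint operator $-i\rho^{1/2}[A,B]\rho^{1/2}$.

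First I would record that the positive operator $|A|=(A^*A)^{1/2}=(A^2)^{1/2}$ has spectrum $\{|\lambda|:\lambda\in{\rm sp}(A)\}$ by the spectral mapping theorem, so the hypothesis yields the clean two-sided bound $mI\le|A|\le MI$. This is the step that converts the two-interval spectral condition on $A$ into a one-sided bound on the positive operator $|A|$, which is exactly what makes the reverse Kadison estimate applicable.

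Next, applying the reverse inequality (\ref{revk}) to the positive operator $|A|$ gives
\[
\Phi(|A|^2)\le\frac{(M+m)^2}{4Mm}\,\Phi(|A|)^2,
\]
while Kadison's inequality (Lemma \ref{revkadison}) applied to $A$ gives $|\Phi(A)|^2=\Phi(A)^*\Phi(A)\le\Phi(A^*A)=\Phi(|A|^2)$. Chaining these two estimates produces the squared inequality
\[
|\Phi(A)|^2\le\frac{(M+m)^2}{4Mm}\,\Phi(|A|)^2=\Bigl(\sqrt{\tfrac{(M+m)^2}{4Mm}}\,\Phi(|A|)\Bigr)^2,
\]
where the last equality uses that $\frac{(M+m)^2}{4Mm}$ is a positive scalar and $\Phi(|A|)\ge 0$.

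Finally I would take square roots. Since both sides are positive operators and $x\mapsto x^{1/2}$ is operator monotone (L\"{o}wner--Heinz), the displayed operator inequality is preserved under the square root, and because $|\Phi(A)|$ and $\sqrt{\frac{(M+m)^2}{4Mm}}\,\Phi(|A|)$ are themselves positive they coincide with the square roots of their own squares. This delivers (\ref{revkadison1}). The one point that needs care---and the only place where the argument could go wrong if handled naively---is this last step: passing from $X^2\le Y^2$ to $X\le Y$ is false in general for noncommuting positive operators, so it is essential that we are literally applying the operator monotone square root to an honest operator inequality $S\le T$ (with $S=|\Phi(A)|^2$ and $T=\frac{(M+m)^2}{4Mm}\Phi(|A|)^2$), rather than formally ``cancelling'' the squares.
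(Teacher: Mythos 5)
Your proof is correct and follows essentially the same route as the paper's: chain Kadison's inequality with the reverse inequality (\ref{revk}) applied to $|A|$ (using the spectral mapping observation $mI\le|A|\le MI$) and then take operator square roots. One small quibble: your closing caveat is stated backwards --- for \emph{positive} operators $X^2\le Y^2$ does imply $X\le Y$, by exactly the L\"{o}wner--Heinz argument you give; it is the converse implication $X\le Y\Rightarrow X^2\le Y^2$ that fails in general.
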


\begin{proof}
Using Lemma \ref{revkadison} and inequality (\ref{revk}), we get
\begin{eqnarray*}
|\Phi(A)|^2\leq \Phi(|A|^2) \leq \dfrac{(M+m)^2}{4Mm}(\Phi(|A|))^2.
\end{eqnarray*}
Taking square roots of both sides of the latter inequality we obtain inequality (\ref{revkadison1}).
\end{proof}

In this paper, we denote $\dfrac{(M+m)^2}{4Mm}$ for an operator $m \leq A \leq M$ by $K_{M,m}(A)$, which is called the Kantorovich constant of $A$.\\
The next theorem gives a Heisenberg's type uncertainty relation for tracial positive linear maps between $C^*$-algebras.
\begin{theorem}\label{main1}
Let $\mathcal{A},\mathcal{B}$ be unital $C^*$-algebras and $\Omega$ be a compact Hausdorff space. Let $\phi_1 :\mathcal{A} \longrightarrow C(\Omega) $ be a unital tracial positive linear map and $\phi_2 :C(\Omega) \longrightarrow \mathcal{B}$ be a unital positive linear map and $\Phi:=\phi_2 \circ \phi_1$. If $\rho \in \mathcal{A}$ is a $\Phi$-density operator and $A,B$ are self-adjoint operators of $\mathcal{A}$ such that $ {\rm sp}(\mbox{-i}\rho^\frac{1}{2}[A,B]\rho^\frac{1}{2}) \subseteq [m,M] $ for some scalers $0<m<M$, then
\begin{eqnarray}\label{inemain1}
V_{\rho,\Phi}(A)\sharp V_{\rho,\Phi}(B)\geq \frac{1}{2\sqrt{K_{m,M}(\rho[A,B])}} \left|\Phi(\rho [A,B])\right|,
\end{eqnarray}
 where $K_{m,M}(\rho[A,B])$ is the Kantorovich constant of the operator $\mbox{-i}\rho^\frac{1}{2}[A,B]\rho^\frac{1}{2}$.
\end{theorem}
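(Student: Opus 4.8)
The plan is to use the given factorization $\Phi=\phi_2\circ\phi_1$ to push the problem into the commutative algebra $C(\Omega)$, where the scalar-type relation of Theorem \ref{heiscom} is available, and then to transport the resulting estimate back to the noncommutative target $\mathcal{B}$ through $\phi_2$. Write $C:=-i\rho^{\frac12}[A,B]\rho^{\frac12}$; since $A,B$ are self-adjoint, $[A,B]^*=-[A,B]$, so $C$ is self-adjoint with $mI\le C\le MI$. The tracial property of $\phi_1$ and of $\Phi$ gives $\phi_1(\rho[A,B])=\phi_1(\rho^{\frac12}[A,B]\rho^{\frac12})=i\,\phi_1(C)$ and $\Phi(\rho[A,B])=i\,\Phi(C)$, and positivity together with $C\ge0$ yields $|\phi_1(\rho[A,B])|=\phi_1(C)$ and $|\Phi(\rho[A,B])|=\Phi(C)$. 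Because $\phi_1$ is unital and positive, one also has $mI\le\phi_1(C)\le MI$, so the spectral data needed for the Kantorovich constant survive the passage to $C(\Omega)$.

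First I would argue inside $C(\Omega)$. As $\phi_1(\mathcal{A})$ is commutative and $\phi_1(\rho)>0$ (which may be assumed, if necessary after an $\varepsilon$-perturbation of $\rho$), Theorem \ref{heiscom} applied to $\phi_1$ yields
\[
V_{\rho,\phi_1}'(A)\,V_{\rho,\phi_1}'(B)\ \ge\ \tfrac14\,|\phi_1(\rho[A,B])|^2 .
\]
Since $C(\Omega)$ is commutative the geometric mean is the pointwise square root of the product, so extracting square roots turns this into
\[
V_{\rho,\phi_1}'(A)\,\sharp\,V_{\rho,\phi_1}'(B)\ \ge\ \tfrac12\,|\phi_1(\rho[A,B])| .
\]

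Next I would transport this estimate through $\phi_2$, which is completely positive (its domain $C(\Omega)$ being commutative) and hence $2$-positive. Applying the order-preserving map $\phi_2$ and then the mean inequality (\ref{meanphi}) for the unital positive map $\phi_2$ gives
\[
\phi_2\!\big(V_{\rho,\phi_1}'(A)\big)\,\sharp\,\phi_2\!\big(V_{\rho,\phi_1}'(B)\big)\ \ge\ \phi_2\!\big(V_{\rho,\phi_1}'(A)\,\sharp\,V_{\rho,\phi_1}'(B)\big)\ \ge\ \tfrac12\,\phi_2\!\big(|\phi_1(\rho[A,B])|\big).
\]
Two comparisons then finish the proof. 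On the left I would show $\phi_2(V_{\rho,\phi_1}'(A))\le V_{\rho,\Phi}(A)$ (and similarly for $B$): writing $S=\phi_1(\rho A)$, which is self-adjoint by traciality, the operator Cauchy--Schwarz inequality $\phi_2(S\phi_1(\rho)^{-1}S)\ge\phi_2(S)\,\phi_2(\phi_1(\rho))^{-1}\phi_2(S)$ (a consequence of the $2$-positivity of $\phi_2$ via Lemma \ref{matrixp}) together with $\Phi(\rho)=I$ gives exactly this; joint monotonicity of $\sharp$ then upgrades the left-hand side to $V_{\rho,\Phi}(A)\,\sharp\,V_{\rho,\Phi}(B)$. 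On the right, the reverse Kadison inequality (\ref{revkadison1}), applied to $\phi_2$ and to $\phi_1(\rho[A,B])$ (whose modulus $\phi_1(C)$ has spectrum in $[m,M]$), gives $\phi_2(|\phi_1(\rho[A,B])|)\ge\frac{1}{\sqrt{K_{m,M}(\rho[A,B])}}\,|\Phi(\rho[A,B])|$. Chaining the displays produces (\ref{inemain1}).

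I expect the transfer from $C(\Omega)$ to $\mathcal{B}$ to be the crux. Neither the geometric mean nor the modulus commutes with a positive map, so only the one-sided inequalities (\ref{meanphi}) and (\ref{revkadison1}) are available; it is precisely the reverse Kadison step that manufactures the Kantorovich constant $K_{m,M}(\rho[A,B])$ and accounts for the factor $\frac{1}{2\sqrt{K_{m,M}(\rho[A,B])}}$. A secondary technical point to handle with care is the invertibility of $\phi_1(\rho)$, needed both for the primed covariance in Theorem \ref{heiscom} and for the geometric means to be defined; this, together with the strict positivity of the variances, can be secured by a perturbation-and-limit argument.
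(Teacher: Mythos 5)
Your proposal follows the paper's own argument essentially step for step: apply Theorem \ref{heiscom} to $\phi_1$ in the commutative algebra $C(\Omega)$, pass to the geometric mean there, transport through $\phi_2$ using inequality (\ref{meanphi}) together with the $2$-positivity estimate $\phi_2\big(\phi_1(\rho X)\phi_1(\rho)^{-1}\phi_1(\rho X)\big)\ge \Phi(\rho X)^2$ (the paper's inequality (\ref{rah})) to dominate by $V_{\rho,\Phi}(A)\sharp V_{\rho,\Phi}(B)$, and finish with the reverse Kadison inequality (\ref{revkadison1}) to produce the Kantorovich constant. The proof is correct and matches the paper's route, including the continuity/perturbation reduction to $\rho>0$.
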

\begin{proof}
By a continuity argument we can assume that $\rho>0$. Due to $0<mI\leq \mbox{-i} \rho^\frac{1}{2}[A,B]\rho^\frac{1}{2} \leq MI$ and $\phi_1$ is unital and tracial positive linear, we infer that $mI\leq \mbox{-i}\phi_1( \rho[A,B] )\leq MI$. It follows that
 \begin{eqnarray}
 mI\leq |\phi_1( \rho[A,B] )|\leq MI.
 \end{eqnarray}
Using the fact that $\phi_1$ is a unital positive linear map (and so strictly positive) and applying Theorem \ref{heiscom} for $ \phi_1$ we get
 \begin{eqnarray*}
\left(\phi_1(\rho A^2)-\phi_1(\rho A)^2\phi_1(\rho)^{-1}\right) \left(\phi_1(\rho B^2)-\phi_1(\rho B)^2\phi_1(\rho)^{-1}\right) \geq \frac{1}{4} |\phi_1(\rho [A,B])|^2.
 \end{eqnarray*}
Applying the commutativity of range of $\phi_1$ we get
\begin{eqnarray}\label{varcomm}
\left(\phi_1(\rho A^2)-\phi_1(\rho A)^2\phi_1(\rho)^{-1}\right) \sharp \left(\phi_1(\rho B^2)-\phi_1(\rho B)^2\phi_1(\rho)^{-1}\right) \geq \frac{1}{2} |\phi_1(\rho [A,B])|.
\end{eqnarray}
Using the fact that $(\phi_2\circ\phi_1)(\rho)=I$, we can write
\begin{align*}
 \left(\Phi(\rho A^2)- \Phi (\rho A)^2\right) &\sharp \left(\Phi(\rho B^2)-\Phi(\rho B)^2\right)\\ &= \left((\phi_2 \circ\phi_1)(\rho A^2)- (\phi_2\circ\phi_1)(\rho A)^2\right) \\ &\quad\sharp \left((\phi_2\circ\phi_1)(\rho B^2)-(\phi_2\circ \phi_1)(\rho B)^2\right)\\&= \left((\phi_2 \circ\phi_1)(\rho A^2)- (\phi_2\circ\phi_1)(\rho A) (\phi_2\circ\phi_1)(\rho) (\phi_2\circ\phi_1)(\rho A)\right)\\ &\quad \sharp \left((\phi_2\circ\phi_1)(\rho B^2)-(\phi_2\circ \phi_1)(\rho B) (\phi_2\circ \phi_1)(\rho)(\phi_2\circ \phi_1)(\rho B)\right).
 \end{align*}
We claim that
\begin{align*}
\big((\phi_2 \circ\phi_1)&(\rho A^2)-(\phi_2\circ\phi_1)(\rho A) (\phi_2\circ\phi_1)(\rho)(\phi_2\circ\phi_1)(\rho A)\big)\\ &\quad\sharp \left((\phi_2\circ\phi_1)(\rho B^2)-(\phi_2\circ \phi_1)(\rho B) (\phi_2\circ \phi_1)(\rho)(\phi_2\circ \phi_1)(\rho B)\right)
\\ &\geq \phi_2 \big(\phi_1(\rho A^2)-\phi_1(\rho A)^2 \phi_1(\rho)^{-1}\big) \sharp \phi_2 \big(\phi_1(\rho B^2)-\phi_1(\rho B)^2 \phi_1(\rho)^{-1}\big).
\end{align*}
Since the range of $\phi_1$ is commutative, to prove our claim, it is enough to show that
\begin{eqnarray}\label{rah}
\phi_2\left(\phi_1(\rho X) \phi_1(\rho)^{-1}\phi_1(\rho X)\right) \geq (\phi_2\circ\phi_1)(\rho X) (\phi_2\circ\phi_1)(\rho)(\phi_2\circ\phi_1)(\rho X)
\end{eqnarray}
for every self-adjoint operator $X\in \mathcal{A}$ and $(\phi_2\circ\phi_1)$-density operator $\rho\in \mathcal{A}$.
Clearly, matrix $\begin{pmatrix} \phi_1(\rho X) \phi_1(\rho)^{-1}\phi_1(\rho X) & \phi_1(\rho X)\\ \phi_1(\rho X) & \phi_1(\rho) \end{pmatrix}$ is positive. Since $\Phi_2$ is completely positive (and so 2-positive), we get
\begin{eqnarray*}
\begin{pmatrix} \phi_2(\phi_1(\rho X) \phi_1(\rho)^{-1}\phi_1(\rho X)) & (\phi_2\circ \phi_1)(\rho X)\\ (\phi_2\circ \phi_1)(\rho X) & (\phi_2\circ \phi_1)(\rho) \end{pmatrix} \geq 0,
\end{eqnarray*}
which ensures the validity of inequality (\ref{rah}).
Therefore,
 \begin{eqnarray}
 \nonumber V_{\rho,\Phi}(A)\sharp V_{\rho,\Phi}(B) &=& \left(\Phi(\rho A^2)- \Phi (\rho A)^2\right) \sharp \left(\Phi(\rho B^2)-\Phi(\rho B)^2\right)\\ \nonumber &=& \left((\phi_2 \circ\phi_1)(\rho A^2)- (\phi_2\circ\phi_1)(\rho A)^2\right) \\ \nonumber &&\sharp \left((\phi_2\circ\phi_1)(\rho B^2)-(\phi_2\circ \phi_1)(\rho B)^2\right)\\ \nonumber &=& \left((\phi_2 \circ\phi_1)(\rho A^2)- (\phi_2\circ\phi_1)(\rho A) (\phi_2\circ\phi_1)(\rho) (\phi_2\circ\phi_1)(\rho A)\right)\\ \nonumber && \sharp \left((\phi_2\circ\phi_1)(\rho B^2)-(\phi_2\circ \phi_1)(\rho B) (\phi_2\circ \phi_1)(\rho)(\phi_2\circ \phi_1)(\rho B)\right) \\ \nonumber &\geq& \phi_2 \big(\phi_1(\rho A^2)-\phi_1(\rho A)^2 \phi_1(\rho)^{-1}\big) \\ \nonumber && \sharp \phi_2 \big(\phi_1(\rho B^2)-\phi_1(\rho B)^2 \phi_1(\rho)^{-1}\big)\\ \nonumber &&\qquad \qquad \qquad \qquad \qquad \ \ \ \ \ \ \ \ \ \ ({\rm by \ inequality\ (\ref{rah})}) \\ \nonumber &\geq& \phi_2 \Big( \big(\phi_1(\rho A^2)-\phi_1(\rho A)^2 \phi_1(\rho)^{-1}\big) \\ \nonumber && \sharp \big(\phi_1(\rho B^2)-\phi_1(\rho B)^2 \phi_1(\rho)^{-1}\big)\Big) \\ \nonumber &&\qquad \qquad \qquad \qquad \qquad \ \ \ \ \ \ \ \ \ \ ({\rm by \ inequality (\ref{meanphi}) })\\ \label{namosavi}
 &\geq & \frac{1}{2} \phi_2(|\phi_1(\rho [A,B])|)\\ \nonumber &&\qquad \qquad \qquad \qquad \qquad \ \ \ \ \ \ \ \ \ \ ({\rm by\ inequality\ (\ref{varcomm}})\\ \nonumber
 &\geq & \frac{1}{2\sqrt{K_{m,M}(\rho[A,B])}} \left|\phi_2\circ\phi_1(\rho [A,B])\right|\\ \nonumber &&\qquad \qquad \qquad \qquad \qquad \ \ \ \ \ \ \ \ \ \ ({\rm by\ inequality \ (\ref{revkadison1})})\\ \nonumber &=&\frac{1}{2\sqrt{K_{m,M}(\rho[A,B])}} |\Phi(\rho [A,B])|.
\end{eqnarray}
\end{proof}
Applying Lemma \ref{choi}, we immediately get the following corollary.
\begin{corollary}
Let $\mathcal{A},\mathcal{B}$ be $C^*$-algebras such that either one of them is $W^*$-algebra or $\mathcal{B}$ is an injective $C^*$-algebra. Let $\Phi:\mathcal{A}\longrightarrow \mathcal{B}$ be a tracial positive linear map and $\rho \in \mathcal{A}$ be a $\Phi$-density operator and $A,B$ are self-adjoint operators in $ \mathcal{A}$ such that $ {\rm sp}(\mbox{-i}\rho^\frac{1}{2}[A,B]\rho^\frac{1}{2})\subseteq [m,M] $ for some scalars $0<m<M$, then
\begin{eqnarray*}
V_{\rho,\Phi}(A)\sharp V_{\rho,\Phi}(B)\geq \frac{1}{2\sqrt{K_{m,M}(\rho[A,B])}} \left|\Phi(\rho [A,B])\right|,
\end{eqnarray*}
 where $K_{[m,M]}(\rho[A,B])$ is the Kantorovich constant of the operator $\mbox{-i}\rho^\frac{1}{2}[A,B]\rho^\frac{1}{2}$.
\end{corollary}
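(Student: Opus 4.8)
The plan is to reduce the statement to Theorem \ref{main1} by means of the Choi--Tsui factorization supplied by Lemma \ref{choi}. Since $\mathcal{A},\mathcal{B}$ satisfy the standing hypothesis of that lemma (one of them is a $W^*$-algebra, or $\mathcal{B}$ is injective) and $\Phi$ is tracial positive, Lemma \ref{choi} produces a commutative $C^*$-algebra $C(X)$ together with tracial positive linear maps $\phi_1 : \mathcal{A} \to C(X)$ and $\phi_2 : C(X) \to \mathcal{B}$ with $\Phi = \phi_2 \circ \phi_1$. Taking $\Omega = X$, this is exactly the shape of map treated in Theorem \ref{main1}, so the scheme is: realize $\Phi$ as such a composition, verify that the hypotheses of Theorem \ref{main1} hold for $\phi_1,\phi_2$, and then read off inequality (\ref{inemain1}) verbatim.

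I would then check that the quantitative data transfer without change. The spectral condition ${\rm sp}(-i\rho^{\frac12}[A,B]\rho^{\frac12}) \subseteq [m,M]$ lives entirely inside $\mathcal{A}$ and is independent of the factorization, so it is available to feed into $\phi_1$; likewise the Kantorovich constant $K_{m,M}(\rho[A,B])$ is computed solely from the bounds $m,M$ of this one fixed operator. The hypothesis that $\rho$ is a $\Phi$-density element is precisely $(\phi_2\circ\phi_1)(\rho)=I$, which is the normalization used throughout the proof of Theorem \ref{main1}. Granting these, Theorem \ref{main1} delivers
\[
V_{\rho,\Phi}(A)\sharp V_{\rho,\Phi}(B)\geq \frac{1}{2\sqrt{K_{m,M}(\rho[A,B])}}\left|\Phi(\rho[A,B])\right|,
\]
which is the desired conclusion, with $\Phi = \phi_2\circ\phi_1$.

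The main obstacle is that Theorem \ref{main1} demands that the factors $\phi_1$ and $\phi_2$ be \emph{unital}, whereas a priori neither $\Phi$ nor its factorization need be. This is exactly where the final clause of Lemma \ref{choi} is essential: when $\Phi$ is unital, the maps $\phi_1$ and $\phi_2$ may be chosen unital, placing us squarely in the hypotheses of Theorem \ref{main1}. For a general tracial positive $\Phi$ I would reduce to this case by passing to the unitizations of $\mathcal{A}$ and $\mathcal{B}$ and extending $\Phi$ to a unital tracial positive map, the density relation $\Phi(\rho)=I$ ensuring that the extension acts as a unital map on the elements actually entering the estimate. Once unitality of $\phi_1$ and $\phi_2$ is secured, every step of Theorem \ref{main1}---the transfer of the spectral bound $mI\leq -i\phi_1(\rho[A,B])\leq MI$, the application of Theorem \ref{heiscom} to the strictly positive map $\phi_1$ with commutative range, and the use of inequalities (\ref{meanphi}) and (\ref{revkadison1}) for $\phi_2$---goes through unchanged, and the corollary follows at once.
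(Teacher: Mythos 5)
Your proof follows exactly the paper's route: the paper derives this corollary in one line by invoking the Choi--Tsui factorization (Lemma \ref{choi}) to write $\Phi=\phi_2\circ\phi_1$ through a commutative $C(X)$ and then applying Theorem \ref{main1}. Your additional discussion of the unitality of $\phi_1$ and $\phi_2$ addresses a point the paper passes over silently, and your proposed reduction to the unital case is a reasonable (if only sketched) way to close that small gap.
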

If $\phi_1:={\rm Tr}$ is the usual trace and $\phi_2$ is the identity map on $\mathbb{C}$, then it immediately follows from inequality (\ref{namosavi}) that
\begin{eqnarray*}
V_{\rho}(A)^\frac{1}{2}V_{\rho}(B)^\frac{1}{2 }\geq \frac{1}{2} \left|{\rm Tr}(\rho [A,B])\right|.
\end{eqnarray*}
So we get the following result.
\begin{corollary}
For every self-adjoint operators $A,B$ and each density operator $\rho$ it holds that
\begin{eqnarray*}
V_{\rho}(A)V_{\rho}(B)\geq \frac{1}{4} \left|{\rm Tr}(\rho [A,B])\right|^2.
\end{eqnarray*}
\end{corollary}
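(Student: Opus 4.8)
The plan is to recover this as the scalar specialization of the apparatus built for Theorem \ref{main1}. I would take $\mathcal{A}=\mathbb{B}(\mathcal{H})$, let $\Omega$ be a single point so that $C(\Omega)=\mathbb{C}$, and set $\phi_1={\rm Tr}$ and $\phi_2={\rm id}_{\mathbb{C}}$, whence $\Phi=\phi_2\circ\phi_1={\rm Tr}$. The $\Phi$-density condition $\Phi(\rho)=1$ then says exactly that $\rho$ is an ordinary density operator, and the generalized variance $V_{\rho,\Phi}(A)$ reduces to the classical $V_\rho(A)={\rm Tr}(\rho A^2)-({\rm Tr}(\rho A))^2$.

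The crucial simplification is that every quantity now lives in $\mathbb{C}$, so the operator geometric mean degenerates to the scalar one: for nonnegative reals $a,b$ one has $a\sharp b=\sqrt{ab}$, and the left-hand side of the bound in Theorem \ref{main1} becomes $V_\rho(A)^{1/2}V_\rho(B)^{1/2}$. I would then invoke the intermediate estimate (\ref{namosavi}) rather than its final Kantorovich-corrected form, because the line $V_{\rho,\Phi}(A)\sharp V_{\rho,\Phi}(B)\geq\frac{1}{2}\phi_2(|\phi_1(\rho[A,B])|)$ occurring there specializes, under $\phi_1={\rm Tr}$ and $\phi_2={\rm id}$, to precisely $V_\rho(A)^{1/2}V_\rho(B)^{1/2}\geq\frac{1}{2}|{\rm Tr}(\rho[A,B])|$. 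Squaring this scalar inequality delivers the asserted bound.

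The only genuine point to verify is that the spectral hypothesis $0<m\leq -i\rho^{1/2}[A,B]\rho^{1/2}\leq M$ of Theorem \ref{main1} is superfluous here and may be dropped. The reason is that this hypothesis enters solely through the reverse-Kadison step (\ref{revkadison1}), which produces the factor $\sqrt{K_{m,M}}$; in the commutative target $\mathbb{C}$ that step collapses to the trivial identity $|z|=|z|$, so no positive lower bound on the commutator is needed, and (\ref{namosavi}) already suffices. Alternatively, one could sidestep Theorem \ref{main1} entirely and apply Corollary \ref{heiscomcor} directly to $\Phi={\rm Tr}$, whose range $\mathbb{C}$ is automatically commutative, then discard the nonnegative term $|{\rm Re}({\rm Cov}_\rho(A,B))|^2$; this reproduces the classical Heisenberg relation (\ref{heis1}) at once. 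I foresee no real obstacle, since the content of the corollary is wholly contained in results already established; the only care required is to apply the scalar degeneration of $\sharp$ and the vanishing of the Kantorovich correction correctly.
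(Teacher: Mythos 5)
Your proposal is correct and is essentially the paper's own argument: the paper derives this corollary by specializing $\phi_1={\rm Tr}$ and $\phi_2={\rm id}_{\mathbb{C}}$ in the intermediate estimate (\ref{namosavi}), obtaining $V_{\rho}(A)^{1/2}V_{\rho}(B)^{1/2}\geq \frac{1}{2}\left|{\rm Tr}(\rho[A,B])\right|$ and squaring, exactly as you do. Your observation that the spectral hypothesis and the Kantorovich factor are not needed at the stage of (\ref{namosavi}) is accurate and matches the paper's implicit reasoning.
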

\begin{remark}
Let $\mathcal{A}$ and $\mathcal{B}$ be  unital $C^*$-algebras. By readout the proof of Theorem \ref{heiscom}, Theorem \ref{main1} and Corollary \ref{heiscomcor}, if we put $\rho:=I$, then we can replace the condition ``tracial positive linear map'' with ``unital positive linear map''.
\end{remark}

\section{Inequalities for generalized correlation and Wigner--Yanase skew information}
We aim to give generalizations of inequality (\ref{semic}) and inequality (\ref{skhies}). In addition we generalize some inequalities related to classical Wigner--Yanase--Dyson skew information to tracial positive linear map. As mentioned in the introduction, in the case that $\alpha =\frac{1}{2}$ we denote $I_{\rho,\Phi}^{\alpha}$ by $I_{\rho,\Phi}$. In some cases we prove our result by assuming that $\rho$ is only a positive operator. To get these generalizations we need some lemmas. The technique of the first lemma is classic.
\begin{lemma}
Let $\mathcal{A}$ and $\mathcal{B}$ be $C^*$-algebras. If $\Phi :\mathcal{A} \longrightarrow \mathcal{B}$ is a tracial positive linear map, then
\begin{eqnarray}\label{skew}
I_{\rho,\Phi}^{\alpha}(A)= \Phi(\rho A^2) -\Phi(\rho^{1-\alpha}A\rho^\alpha A)\geq 0 \qquad (\alpha \in [0,1])
\end{eqnarray}
for every self-adjoint operator $A\in\mathcal{A}$ and each positive operator $\rho \in \mathcal{A}$.
\end{lemma}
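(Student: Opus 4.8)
The plan is to notice first that the displayed equality is not really something to be proved: by Definition \ref{def} we have $I_{\rho,\Phi}^{\alpha}(A)={\rm Corr}_{\rho,\Phi}^{\alpha}(A,A)=\Phi(\rho A^*A)-\Phi(\rho^{1-\alpha}A^*\rho^{\alpha}A)$, and for self-adjoint $A$ this is exactly $\Phi(\rho A^2)-\Phi(\rho^{1-\alpha}A\rho^{\alpha}A)$. So the entire content is the inequality $\geq 0$. Before addressing it I would record that both summands are self-adjoint elements of $\mathcal{B}$: using the tracial property, $\Phi(\rho A^2)^*=\Phi(A^2\rho)=\Phi(\rho A^2)$, and $\Phi(\rho^{1-\alpha}A\rho^{\alpha}A)^*=\Phi(A\rho^{\alpha}A\rho^{1-\alpha})=\Phi(\rho^{1-\alpha}A\rho^{\alpha}A)$ after one cyclic shift. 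Thus $I_{\rho,\Phi}^{\alpha}(A)$ is self-adjoint and the assertion $I_{\rho,\Phi}^{\alpha}(A)\geq 0$ is meaningful.

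The main idea is to reduce this operator inequality to a scalar one. A self-adjoint $b\in\mathcal{B}$ is positive if and only if $\omega(b)\geq 0$ for every state $\omega$ on $\mathcal{B}$. Fixing such a state, the composite $\tau:=\omega\circ\Phi$ is a bounded positive \emph{tracial} functional on $\mathcal{A}$, traciality being inherited from $\Phi(XY)=\Phi(YX)$. Hence it suffices to prove
\[
\tau(\rho A^2)\geq \tau(\rho^{1-\alpha}A\rho^{\alpha}A)
\]
for every positive tracial functional $\tau$ on $\mathcal{A}$, which is precisely the classical nonnegativity of the Wigner--Yanase--Dyson skew information relative to a trace.

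For the scalar statement I would pass to the GNS representation $(\pi_\tau,\mathcal{H}_\tau,\xi_\tau)$ of the trace $\tau$ and work inside the finite von Neumann algebra $M=\pi_\tau(\mathcal{A})''$, on which $\tau$ extends to a normal trace and $\pi_\tau(\rho)$ is positive with spectral resolution $\{E_\lambda\}$. The key object is the measure $d\nu(\lambda,\mu):=\tau\big(dE_\lambda\,\pi_\tau(A)\,dE_\mu\,\pi_\tau(A)\big)$ on ${\rm sp}(\rho)\times{\rm sp}(\rho)$. For spectral projections $E,E'$ one has $\tau(EAE'A)=\tau\big((EAE')(EAE')^*\big)\geq 0$, since $(EAE')^*=E'AE$ and $\tau$ is a positive trace, so $\nu$ is a positive measure; and $\tau(EAE'A)=\tau(E'AEA)$ by traciality, so $\nu$ is symmetric. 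Writing $\tau(\rho A^2)=\iint\lambda\,d\nu(\lambda,\mu)$ and $\tau(\rho^{1-\alpha}A\rho^{\alpha}A)=\iint\lambda^{1-\alpha}\mu^{\alpha}\,d\nu(\lambda,\mu)$ and symmetrizing, the difference becomes $\tfrac12\iint\big(\lambda+\mu-\lambda^{1-\alpha}\mu^{\alpha}-\lambda^{\alpha}\mu^{1-\alpha}\big)\,d\nu(\lambda,\mu)$, which is nonnegative by the weighted arithmetic--geometric mean inequality $\lambda^{1-\alpha}\mu^{\alpha}+\lambda^{\alpha}\mu^{1-\alpha}\leq\lambda+\mu$.

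I expect the main obstacle to be making this spectral-measure reduction rigorous for general $\alpha\in[0,1]$. It is worth flagging that the case $\alpha=\tfrac12$ admits a direct shortcut: a computation using only traciality gives $I_{\rho,\Phi}^{\alpha}(A)=-\tfrac12\Phi\big([\rho^{\alpha},A][\rho^{1-\alpha},A]\big)$, and at $\alpha=\tfrac12$, since $[\rho^{1/2},A]$ is skew-adjoint, this equals $\tfrac12\Phi\big([\rho^{1/2},A]^*[\rho^{1/2},A]\big)\geq 0$ by positivity of $\Phi$. For $\alpha\neq\tfrac12$ no such sum-of-squares representation is available: the relevant term $-\tfrac12\Phi\big([\rho^{\alpha},A][\rho^{1-\alpha},A]\big)$ is self-adjoint but the factor $[\rho^{\alpha},A][\rho^{1-\alpha},A]$ is an anticommutator-type product that is not positive, which is exactly why the positive symmetric measure $\nu$ together with the AM--GM kernel is the correct tool.
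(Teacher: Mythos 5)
Your argument is correct in substance, but it takes a genuinely different route from the paper's. The paper never leaves the operator level and never touches spectral theory: it sets $\Delta=\{\alpha\in[0,1]:\Phi(\rho A^2)\geq\Phi(\rho^{\alpha}A\rho^{1-\alpha}A)\}$, observes that $0,1\in\Delta$ and that $\Delta$ is closed by norm continuity of $\alpha\mapsto\Phi(\rho^{\alpha}A\rho^{1-\alpha}A)$, and then shows $\Delta$ is closed under midpoints by expanding $\Phi\bigl((\rho^{\frac{1-\alpha}{2}}A\rho^{\frac{\alpha}{2}}-\rho^{\frac{1-\beta}{2}}A\rho^{\frac{\beta}{2}})^{*}(\rho^{\frac{1-\alpha}{2}}A\rho^{\frac{\alpha}{2}}-\rho^{\frac{1-\beta}{2}}A\rho^{\frac{\beta}{2}})\bigr)\geq 0$ with the tracial property, which gives the midpoint convexity $\Phi(\rho^{\alpha}A\rho^{1-\alpha}A)+\Phi(\rho^{\beta}A\rho^{1-\beta}A)\geq 2\Phi(\rho^{\frac{\alpha+\beta}{2}}A\rho^{1-\frac{\alpha+\beta}{2}}A)$; a closed midpoint-convex subset of $[0,1]$ containing the endpoints is all of $[0,1]$. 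So both proofs ultimately run on the same engine (positivity of $\Phi(X^{*}X)$ plus traciality), but you apply it to $X=E'AE$ inside a GNS von Neumann algebra after reducing to scalar tracial functionals via states, whereas the paper applies it to the difference of two weighted conjugates of $A$ and needs no representation theory at all. Your route buys a sharper structural picture (the pointwise AM--GM kernel against a positive symmetric spectral bimeasure, which would yield the inequality for any kernel dominated by $\lambda$) at the cost of the machinery you yourself flag: normal extension of the trace to $\pi_{\tau}(\mathcal{A})''$ and the rigor of the two-variable spectral integral --- the clean way to discharge that last point is to replace the product measure by finite spectral partitions of $\rho$, approximating $\rho^{\beta}$ in norm by $\sum_{k}\lambda_{k}^{\beta}E_{k}$ for $\beta\in\{1,\alpha,1-\alpha\}$ and passing to the limit in the resulting finite double sums. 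The paper's argument is shorter, avoids all of this, and has the side benefit (exploited in the subsequent remark) of exhibiting the convexity of $\alpha\mapsto\Phi(\rho^{\alpha}A\rho^{1-\alpha}A)$ explicitly; your closing observation that $I_{\rho,\Phi}^{\alpha}(A)=-\tfrac12\Phi([\rho^{\alpha},A][\rho^{1-\alpha},A])$, with a genuine sum of squares only at $\alpha=\tfrac12$, is accurate and is essentially the same identity the paper uses later for $I_{\rho,\mathcal{E}}$ in Theorem 3.6.
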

\begin{proof}
Put $\Delta=\{\alpha \in [0,1] : \Phi(\rho A^2) \geq \Phi(\rho^{\alpha}A\rho^{1-\alpha} A)\}$.
Clearly $ \{0,1\} \subseteq \Delta$ and the set $\Delta$ is closed, since the map $\alpha \longrightarrow \Phi(\rho^{\alpha}A\rho^{1-\alpha} A)$ is norm continuous. Therefore, to prove $[0,1]\subseteq \Delta$ it is enough to show that $\alpha ,\beta \in \Delta$ implies $\frac{\alpha+\beta}{2}\in \Delta$. It follows from the tracial positivity of $\Phi$ that
\begin{eqnarray*}
0&\leq& \Phi\left((\rho^{\frac{1-\alpha}{2}} A\rho^{\frac{\alpha}{2}} -\rho^{\frac{1-\beta}{2}} A\rho^{\frac{\beta}{2}})^*(\rho^{\frac{1-\alpha}{2}} A\rho^{\frac{\alpha}{2}} -\rho^{\frac{1-\beta}{2}} A\rho^{\frac{\beta}{2}})\right)
\\ &=& \Phi\left((\rho^{\frac{\alpha}{2}}A\rho^{\frac{1-\alpha}{2}} -\rho^{\frac{\beta}{2}}A\rho^{\frac{1-\beta}{2}} )(\rho^{\frac{1-\alpha}{2}} A\rho^{\frac{\alpha}{2}} -\rho^{\frac{1-\beta}{2}} A\rho^{\frac{\beta}{2}})\right)
\\ &=& \Phi\left( \rho^{\frac{\alpha}{2}}A\rho^{1-\alpha}A \rho^{\frac{\alpha}{2}} -\rho^{\frac{\alpha}{2}}A\rho^{1-\frac{\alpha+\beta}{2}}A \rho^{\frac{\beta}{2}} -\rho^{\frac{\beta}{2}}A\rho^{1-\frac{\alpha+\beta}{2}}A \rho^{\frac{\alpha}{2}}+\rho^{\frac{\beta}{2}}A\rho^{1-\beta}A \rho^{\frac{\beta}{2}}\right)\\ &=& \Phi( \rho^{\alpha}A\rho^{1-\alpha}A ) -\Phi(\rho^{\frac{\alpha+\beta}{2}}A\rho^{1-\frac{\alpha+\beta}{2}}A) -\Phi(\rho^{\frac{\alpha+\beta}{2}}A\rho^{1-\frac{\alpha+\beta}{2}}A) +\Phi(\rho^{\beta}A\rho^{1-\beta}A).
\end{eqnarray*}
Hence,
\begin{eqnarray}\label{concave}
\Phi( \rho^{\alpha}A\rho^{1-\alpha}A ) +\Phi(\rho^{\beta}A\rho^{1-\beta}A )\geq 2\Phi(\rho^{\frac{\alpha+\beta}{2}}A\rho^{1-\frac{\alpha+\beta}{2}}A),
\end{eqnarray}
since $\alpha,\beta \in \Delta$, we infer
\begin{eqnarray*}
\Phi(\rho A^2)
&\geq& \Phi(\rho^{\frac{\alpha+\beta}{2}}A\rho^{1-\frac{\alpha+\beta}{2}}A),
\end{eqnarray*}
which implies that $\frac{\alpha+\beta}{2}\in \Delta$.
\end{proof}
\begin{remark}
Inequality (\ref{concave}) shows that the map $\alpha \rightarrow \Phi(\rho^{\alpha}A\rho^{1-\alpha}A)$ is convex. In addition, we have
\begin{eqnarray}\label{karan}
I_{\rho,\Phi}^{\alpha}(A) \leq I_{\rho,\Phi}(A) \ \ (\alpha \in \mathbb{R})
\end{eqnarray}
for every self-adjoint operator $A$ and each positive operator $\rho$. Indeed, using the positivity and the tracial property of $\Phi$, we get
\begin{eqnarray*}
0&\leq& \Phi\left((\rho^{\frac{\alpha}{2}} A\rho^{\frac{1-\alpha}{2}} -\rho^{\frac{1-\alpha}{2}} A\rho^{\frac{\alpha}{2}})(\rho^{\frac{1-\alpha}{2}} A\rho^{\frac{\alpha}{2}} -\rho^{\frac{\alpha}{2}} A\rho^{\frac{\alpha}{2}})\right)
\\ &=& 2\Phi(\rho^{\alpha}A\rho^{1-\alpha}A) -2\Phi(\rho^{\frac{1}{2}}A\rho^{\frac{1}{2}}A).
\end{eqnarray*}
Therefore $\Phi(\rho^{\alpha}A\rho^{1-\alpha}A) \geq \Phi(\rho^{\frac{1}{2}}A\rho^{\frac{1}{2}}A)$ and so
\begin{eqnarray*}
I_{\rho,\Phi}^{\alpha}(A)= \Phi(\rho A^2)-\Phi(\rho^{\alpha}A\rho^{1-\alpha}A) \leq \Phi(\rho A^2)-\Phi(\rho^{\frac{1}{2}}A\rho^{\frac{1}{2}}A)=I_{\rho,\Phi}(A).
\end{eqnarray*}
\end{remark}

Let $\Phi$ be a tracial positive linear map between $C^*$ algebras. It follows from Lemma \ref{skew} that $ I_{\rho,\Phi}^{\alpha}(A)\geq0$ for every self-adjoint operator $A$, but it is not true in general when $A$ is an arbitrary operator. Hence even if $\Phi$ is a tracial positive functional, then $\mbox{Corr}_{\rho,\Phi}^{\alpha}(\cdot,\cdot)$ cannot induce a complex valued semi-inner product and we cannot use the Cauchy--Schwarz inequality; see \cite[Remark IV.2]{yangi}. The next lemma helps us to give a positive definite version of the generalized correlation.
\begin{lemma}\label{corrmosbat}
Let $\Phi: \mathcal{A}\longrightarrow \mathcal{B}$ be a tracial positive linear map and $\rho\in \mathcal{A}$ be a positive operator. Then
\begin{eqnarray*}
I_{\rho,\Phi}^{\alpha}(A) +I_{\rho,\Phi}^{\alpha}(A^*)\geq 0
\end{eqnarray*}
for every $A \in \mathcal{A}$.
\end{lemma}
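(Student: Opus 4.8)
The plan is to reduce the statement to the self-adjoint case already settled in Lemma~\ref{skew} by splitting $A$ into its real and imaginary parts. Write $A = H + iK$, where $H = {\rm Re}(A)$ and $K = {\rm Im}(A)$ are self-adjoint, so that $A^* = H - iK$. The guiding idea is that, although neither $I_{\rho,\Phi}^\alpha(A)$ nor $I_{\rho,\Phi}^\alpha(A^*)$ need be nonnegative on its own, their sum symmetrizes away the obstructive skew terms that spoil positivity for general $A$.

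First I would expand the two unweighted terms. Since $A^*A = H^2 + K^2 + i[H,K]$ and $AA^* = H^2 + K^2 - i[H,K]$, the commutators cancel upon addition, giving $A^*A + AA^* = 2(H^2+K^2)$ and hence $\Phi(\rho A^*A) + \Phi(\rho AA^*) = 2\Phi(\rho H^2) + 2\Phi(\rho K^2)$. Next I would expand the two weighted cross terms $A^*\rho^\alpha A$ and $A\rho^\alpha A^*$; here the mixed contributions $\pm i\big(H\rho^\alpha K - K\rho^\alpha H\big)$ again cancel in the sum, leaving $A^*\rho^\alpha A + A\rho^\alpha A^* = 2H\rho^\alpha H + 2K\rho^\alpha K$, so that $\Phi(\rho^{1-\alpha}A^*\rho^\alpha A) + \Phi(\rho^{1-\alpha}A\rho^\alpha A^*) = 2\Phi(\rho^{1-\alpha}H\rho^\alpha H) + 2\Phi(\rho^{1-\alpha}K\rho^\alpha K)$.

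Subtracting, these two computations assemble into the identity $I_{\rho,\Phi}^\alpha(A) + I_{\rho,\Phi}^\alpha(A^*) = 2I_{\rho,\Phi}^\alpha(H) + 2I_{\rho,\Phi}^\alpha(K)$. Since $H$ and $K$ are self-adjoint, Lemma~\ref{skew} gives $I_{\rho,\Phi}^\alpha(H) \geq 0$ and $I_{\rho,\Phi}^\alpha(K) \geq 0$, and the conclusion follows at once. The only point requiring care is the cancellation of the imaginary commutator terms in both the unweighted and the weighted expansions; this cancellation is exactly what the symmetrization in $A$ and $A^*$ produces, and it is precisely what rescues positivity in the non-self-adjoint setting flagged in the remark preceding the lemma.
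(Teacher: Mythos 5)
Your proof is correct, but it takes a genuinely different route from the paper. You use the Cartesian decomposition $A=H+iK$ with $H,K$ self-adjoint, verify that the skew terms $\pm i[H,K]$ and $\pm i(H\rho^\alpha K-K\rho^\alpha H)$ cancel under symmetrization, and arrive at the clean identity $I_{\rho,\Phi}^{\alpha}(A)+I_{\rho,\Phi}^{\alpha}(A^*)=2I_{\rho,\Phi}^{\alpha}(H)+2I_{\rho,\Phi}^{\alpha}(K)$, after which the self-adjoint case of the earlier lemma finishes the job; I checked the algebra and it is right, and that lemma does apply to $H$ and $K$ since it only requires $\rho\geq 0$. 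The paper instead uses a $2\times 2$ dilation: it defines $\tilde{\Phi}\bigl(\begin{smallmatrix}X&Y\\Z&W\end{smallmatrix}\bigr)=\frac{1}{2}\Phi(X+W)$ on $M_2(\mathcal{A})$, which is again tracial and positive, and applies the self-adjoint lemma to $\tilde{A}=\bigl(\begin{smallmatrix}0&A^*\\A&0\end{smallmatrix}\bigr)$ and $\tilde{\rho}={\rm diag}(\rho,\rho)$; expanding $\tilde{\Phi}(\tilde{\rho}\tilde{A}^2)\geq\tilde{\Phi}(\tilde{\rho}^{1-\alpha}\tilde{A}\tilde{\rho}^{\alpha}\tilde{A})$ yields exactly the desired sum. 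Your argument is more elementary and makes the cancellation mechanism explicit, at the cost of a direct computation; the paper's dilation avoids all expansion and is a reusable device for transferring self-adjoint results to symmetrized statements about arbitrary elements. Both reductions rest on the same earlier positivity lemma, so neither is logically stronger than the other here.
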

\begin{proof}
We define $\tilde{\Phi} :M_2(\mathcal{A}) \longrightarrow B$ by $\tilde{\Phi} \left(\begin{bmatrix} A&B\\ C&D \end{bmatrix}\right)=\dfrac{1}{2}\Phi(A+D)$. It is obvious that $\tilde{\Phi}$ is a tracial positive linear map. Take $ \tilde{A}=\begin{bmatrix} 0&A^*\\ A&0 \end{bmatrix}$ and $ \tilde{\rho}=\begin{bmatrix} \rho & 0\\ 0&\rho \end{bmatrix}$. Clearly $\tilde{A}$ is a self-adjoint operator in $M_2(\mathcal{A})$ and $\tilde{\rho}$ is a $\tilde{\Phi}$-density operator. Using Lemma \ref{skew} for $\tilde{\Phi}$, we get
\begin{eqnarray*}
\frac{1}{2}\left( \Phi(\rho A^*A)+\Phi(\rho AA^*)\right) &=& \tilde{\Phi}\left(\begin{bmatrix} \rho & 0\\ 0&\rho \end{bmatrix} \begin{bmatrix} 0&A^*\\ A&0 \end{bmatrix}\begin{bmatrix} 0&A^*\\ A&0 \end{bmatrix}\right)\\&=& \tilde{\Phi} (\tilde{\rho} \tilde{A}^2)\\ &\geq& \tilde{\Phi} (\tilde{\rho}^{1-\alpha} \tilde{A}\tilde{\rho}^\alpha \tilde{A})\\ && \qquad \qquad \qquad ({\rm by \ Lemma \ \ref{skew}}) \\ &=& \tilde{\Phi}\left(\begin{bmatrix} \rho^{1-\alpha} & 0\\ 0&\rho^{1-\alpha} \end{bmatrix} \begin{bmatrix} 0&A^*\\ A&0 \end{bmatrix}\begin{bmatrix} \rho^\alpha & 0\\ 0&\rho^\alpha \end{bmatrix}\begin{bmatrix} 0&A^*\\ A&0 \end{bmatrix}\right)
\\ &=&\frac{1}{2} (\Phi(\rho^{1-\alpha} A^*\rho^\alpha A)+\Phi(\rho^{1-\alpha} A\rho^\alpha A^*)).
\end{eqnarray*}
Hence,
\begin{eqnarray*}
I_{\rho,\Phi}^{\alpha}(A) +I_{\rho,\Phi}^{\alpha}(A^*) &=& \Phi(\rho A^*A)+\Phi(\rho AA^*)\\ && - \Phi(\rho^{1-\alpha} A^*\rho^\alpha A)-\Phi(\rho^{1-\alpha} A\rho^\alpha A^*) \geq 0.
\end{eqnarray*}
\end{proof}
\begin{definition}\label{semi}
Let $\Phi: \mathcal{A}\longrightarrow \mathcal{B}$ be a tracial positive linear map and $\rho\in \mathcal{A}$ be a $\Phi$-density operator. Then for every operators $A,B \in \mathcal{A}$, we set
\begin{eqnarray*}
{\rm Corr}_{\rho,\Phi}'^{\alpha}(A,B):=\dfrac{1}{2} \left({\rm Corr}_{\rho,\Phi}^{\alpha}(A,B)+{\rm Corr}_{\rho,\Phi}^{\alpha}(B^*,A^*)\right)\ {\rm and} \ I_{\rho,\Phi}'^{\alpha}(A):={\rm Corr}_{\rho,\Phi}'^{\alpha}(A,A).
\end{eqnarray*}

It is easy to check that ${\rm Corr}_{\rho,\Phi}'^{\alpha}(A,B)$ has the following properties:
\begin{itemize}
\item [(i)] ${\rm Corr}_{\rho,\Phi}'^{\alpha}(A,A) \geq 0, {\rm \ for\ every}\ A\in \mathcal{A}, \ {\rm (by \ Lemma\ \ref{corrmosbat})}$,
\item [(ii)] ${\rm Corr}_{\rho,\Phi}'^{\alpha}(A,B+\lambda C) ={\rm Corr}_{\rho,\Phi}'^{\alpha}(A,B) +\lambda {\rm Corr}_{\rho,\Phi}'^{\alpha}(A,C), {\rm \ for\ all}\ A,B\in \mathcal{A} {\rm \ and\ every\ } \lambda \in \mathbb{C}$,
\item [(iii)] ${\rm Corr}_{\rho,\Phi}'^{\alpha}(A,B)^*={\rm Corr}_{\rho,\Phi}'^{\alpha}(B,A)$.
\end{itemize}
\end{definition}
Next we give a generalization of inequality (\ref{semic}).

\begin{theorem}
Let $\mathcal{A}$ be a $C^*$-algebra and $\mathcal{B}$ be $C^*$-subalgebra of $\mathcal{A}$. If $\mathcal{E}:\mathcal{A}\longrightarrow \mathcal{B}$ is a tracial conditional expectation, then
\begin{eqnarray*}
\left|{\rm Re} ({\rm Corr}_{\rho,\mathcal{E}}^{\alpha}(A,B))\right|^2 \leq I_{\rho,\mathcal{E}}^{\alpha}(A) I_{\rho,\mathcal{E}}^{\alpha}(B)
\end{eqnarray*}
for all self-adjoint operators $A,B \in \mathcal{A}$ and each $\mathcal{E}$-density operator $\rho \in \mathcal{A}$.
\end{theorem}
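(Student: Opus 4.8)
The plan is to recognize the right-hand side as a Cauchy--Schwarz bound for the $\mathcal{B}$-valued semi-inner product furnished by ${\rm Corr}_{\rho,\mathcal{E}}'^{\alpha}$. First I would record that a tracial conditional expectation forces $\mathcal{B}$ to be commutative: since $\mathcal{E}$ restricts to the identity on $\mathcal{B}$, its range is all of $\mathcal{B}$, and by (\ref{concom}) each $\mathcal{E}(A)$ commutes with every element of $\mathcal{B}$; as $\mathcal{E}(A)$ ranges over $\mathcal{B}$, any two elements of $\mathcal{B}$ commute. Consequently every value of ${\rm Corr}_{\rho,\mathcal{E}}'^{\alpha}$ lies in $\mathcal{Z}(\mathcal{B})=\mathcal{B}$, which is exactly the hypothesis needed to invoke the sharp inequality (\ref{cuachysharp}).

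Next I would verify that $\langle A,B\rangle:={\rm Corr}_{\rho,\mathcal{E}}'^{\alpha}(A,B)$ turns $\mathcal{A}$ into a semi-inner product module over $\mathcal{B}$. Positivity, sesquilinearity, and the symmetry $\langle A,B\rangle^{*}=\langle B,A\rangle$ are precisely properties (i)--(iii) of Definition \ref{semi}, the positivity coming from Lemma \ref{corrmosbat}. The remaining module compatibility $\langle A,Bb\rangle=\langle A,B\rangle\, b$ for $b\in\mathcal{B}$ I would obtain by pushing $b$ across under $\mathcal{E}$, using the tracial identity $\mathcal{E}(XY)=\mathcal{E}(YX)$ together with (\ref{concom}) and the commutativity of $\mathcal{B}$ just established.

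The third step specializes to self-adjoint $A,B$. Using that $A,B,\rho$ are self-adjoint together with the tracial property, I would check that ${\rm Corr}_{\rho,\mathcal{E}}^{\alpha}(A,B)^{*}={\rm Corr}_{\rho,\mathcal{E}}^{\alpha}(B,A)$; hence ${\rm Corr}_{\rho,\mathcal{E}}'^{\alpha}(A,B)=\tfrac12\big({\rm Corr}_{\rho,\mathcal{E}}^{\alpha}(A,B)+{\rm Corr}_{\rho,\mathcal{E}}^{\alpha}(A,B)^{*}\big)={\rm Re}\big({\rm Corr}_{\rho,\mathcal{E}}^{\alpha}(A,B)\big)$ is self-adjoint, and similarly $I_{\rho,\mathcal{E}}'^{\alpha}(A)=I_{\rho,\mathcal{E}}^{\alpha}(A)$. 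Since $\langle A,A\rangle$ is central, (\ref{cuachysharp}) applies with $x=B$, $y=A$, giving $\langle B,A\rangle\langle A,B\rangle\leq\langle A,A\rangle\langle B,B\rangle$. Because $\langle A,B\rangle$ is self-adjoint the left side equals $\langle A,B\rangle^{2}=\big|{\rm Re}({\rm Corr}_{\rho,\mathcal{E}}^{\alpha}(A,B))\big|^{2}$, while the right side is $I_{\rho,\mathcal{E}}^{\alpha}(A)\,I_{\rho,\mathcal{E}}^{\alpha}(B)$, which is the desired inequality.

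The main obstacle is justifying that (\ref{cuachysharp}) is actually available, that is, that ${\rm Corr}_{\rho,\mathcal{E}}'^{\alpha}$ is a genuine $\mathcal{B}$-valued semi-inner product with \emph{central} diagonal; both the module compatibility and the centrality rest on $\mathcal{B}$ being commutative, which is exactly where the \emph{tracial} hypothesis on $\mathcal{E}$ (rather than mere positivity) enters. Without it one is left with only the weaker norm form of the Cauchy--Schwarz inequality and cannot reach an operator inequality. The verification that the unprimed correlation is conjugate-symmetric on self-adjoint arguments is the other point at which traciality is indispensable.
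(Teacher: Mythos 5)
Your proposal is correct and follows essentially the same route as the paper: realize ${\rm Corr}_{\rho,\mathcal{E}}'^{\alpha}$ as a $\mathcal{B}$-valued semi-inner product with central values (via the module identity $\langle A,BC\rangle=\langle A,B\rangle C$ obtained from traciality and the bimodule property of $\mathcal{E}$), and then apply the sharp Cauchy--Schwarz inequality (\ref{cuachysharp}) to self-adjoint $A,B$. Your extra observations --- that $\mathcal{B}$ is in fact commutative and that ${\rm Corr}_{\rho,\mathcal{E}}'^{\alpha}(A,B)={\rm Re}({\rm Corr}_{\rho,\mathcal{E}}^{\alpha}(A,B))$ for self-adjoint arguments --- are correct elaborations of steps the paper states more tersely, not a different argument.
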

\begin{proof}
Define the map $\langle\cdot , \cdot \rangle :\mathcal{A} \times \mathcal{A} \longrightarrow \mathcal{B}$ by $\langle A ,B \rangle={\rm Corr}_{\rho,\mathcal{E}}'^{\alpha}(A,B)$. If $A,B \in \mathcal{A}$ and $C\in \mathcal{B}$, then
\begin{eqnarray*}
\langle A ,BC \rangle &=& {\rm Corr}_{\rho,\mathcal{E}}'^{\alpha}(A,BC)\\ &=&\frac{1}{2} \left({\rm Corr}_{\rho,\mathcal{E}}^{\alpha}(A,BC)+{\rm Corr}_{\rho,\mathcal{E}}^{\alpha}(C^*B^*,A^*)\right)\\ &=& \frac{1}{2}\left(\mathcal{E}(\rho A^*BC)-\mathcal{E}(\rho^{1-\alpha}A^* \rho^\alpha BC) +\mathcal{E}(\rho BCA^*)-\mathcal{E}(\rho^{1-\alpha}BC \rho^\alpha A^*)\right)\\ &=& \frac{1}{2}\left(\mathcal{E}(\rho A^*BC)-\mathcal{E}(\rho^{1-\alpha}A^* \rho^\alpha BC) +\mathcal{E}(CA^*\rho B)-\mathcal{E}(C \rho^{\alpha} A^*\rho^{1-\alpha} B)\right) \\ && \qquad \qquad \qquad \qquad \qquad \qquad \qquad \qquad \qquad ({\rm since \ \mathcal{E} \ is \ tracial})\\ &=& \frac{1}{2}\left(\mathcal{E}(\rho A^*B)C-\mathcal{E}(\rho^{1-\alpha}A^* \rho^\alpha B)C +\mathcal{E}(\rho BA^*)C-\mathcal{E}(\rho^{1-\alpha} B\rho^\alpha A^*)C\right) \\ &&\ \ \ \ ({\rm since \ \mathcal{E} \ is\ a \ conditional \ expectation \ and \ by\ equality \ (\ref{concom})})\\ &=& {\rm Corr}_{\rho,\mathcal{E}}'^{\alpha}(A,B)C \\ &=& \langle A ,B \rangle C.
\end{eqnarray*}
Using this fact and Definition \ref{semi} we see that $(\mathcal{A},\langle\cdot , \cdot \rangle)$ is a semi-inner product $\mathcal{B}$-module. Moreover, equality (\ref{concom}) shows that ${\rm ran}(Z)\subseteq \mathcal{Z}(\mathcal{B})$. If $A$ and $B$ are self-adjoint operators in $\mathcal{A}$, then we get
\begin{eqnarray*}
\left|{\rm Re} ({\rm Corr}_{\rho,\mathcal{E}}^{\alpha}(A,B))\right|^2&=&\left|\dfrac{1}{2}\left( {\rm Corr}_{\rho,\mathcal{E}}^{\alpha}(A,B)+{\rm Corr}_{\rho,\mathcal{E}}^{\alpha}(B,A)\right)\right|^2 \\ &=& \left| {\rm Corr}_{\rho,\mathcal{E}}'^{\alpha}(A,B)\right|^2\\ &=& \left| \langle A ,B \rangle \right|^2 \\ &\leq& \langle A ,A \rangle \langle B,B\rangle \qquad\ ({\rm by\ inequality}\ (\ref{cuachysharp})) \\ &=& I_{\rho,\mathcal{E}}^{\alpha}(A) I_{\rho,\mathcal{E}}^{\alpha}(B) \qquad ({\rm since}\ A,B \ {\rm are \ }{\rm self-adjoint}).
\end{eqnarray*}
\end{proof}

Let $A$ be a self-adjoint operator and $\rho$ be a density operator. According to \cite[Section III]{luo2} we have $I_{\rho}(A)\leq V_{\rho}(A)$. We give a generalization of this inequality for a tracial 2-positive linear map $\Phi$ and a $\Phi$-density operator $\rho$. It follows from Lemma \ref{matrixp} that the matrix
\begin{eqnarray*}
\begin{bmatrix} \rho^\frac{1}{4} A \rho^\frac{1}{4} \rho^\frac{1}{4}A\rho^\frac{1}{4} & \rho^\frac{1}{4}A\rho^\frac{1}{4} \rho^\frac{1}{2}\\ \rho^\frac{1}{2}\rho^\frac{1}{4}A \rho^\frac{1}{4} & \rho \end{bmatrix}
\end{eqnarray*}
is positive. Since $\Phi$ is 2-positive, we have
\begin{eqnarray*}
\begin{bmatrix} \Phi(\rho^\frac{1}{4} A \rho^\frac{1}{2} \rho^\frac{1}{2}A\rho^\frac{1}{4}) & \Phi(\rho^\frac{1}{4}A\rho^\frac{1}{4} \rho^\frac{1}{2})\\ \Phi(\rho^\frac{1}{2}\rho^\frac{1}{4}A \rho^\frac{1}{4}) & \Phi(\rho) \end{bmatrix}\geq 0.
\end{eqnarray*}
Therefore, by using Lemma \ref{matrixp} and applying the tracial property of $\Phi$ we get
\begin{eqnarray}
\Phi(\rho^\frac{1}{2}A \rho^\frac{1}{2}A)\geq \Phi(\rho A) \Phi(\rho)^{-1} \Phi(\rho A) =\Phi(\rho A) ^2,
\end{eqnarray}
which implies that $I_{\rho,\Phi}(A)\leq V_{\rho,\Phi}(A)$. Consequently, by using inequality (\ref{karan}), we reach $I_{\rho,\Phi}^{\alpha} (A)\leq V_{\rho,\Phi}(A)$.\\
For a tracial positive linear map $\Phi$ and a self-adjoint operator $A$, we set
\begin{eqnarray*}
 J_{\rho,\Phi}(A):=2 V_{\rho,\Phi}(A)-I_{\rho,\Phi}(A)\ {\rm and}\ U_{\rho,\Phi}(A):= I_{\rho,\Phi}(A) \sharp J_{\rho,\Phi}(A).
\end{eqnarray*}
 Since $U_{\rho,\Phi}(A)\leq V_{\rho,\Phi}(A)$ (by the arithmetic-geometric mean inequality), the next theorem is a refinement of Theorem \ref{main1} in the case that $\Phi$ is a conditional expectation. To establish it, we model the classical techniques (see \cite{luo}) to the non commutative framework.

\begin{theorem}\label{maincon}
Let $\mathcal{A}$ be a $C^*$-algebra and $\mathcal{B}$ be a $C^*$-subalgebra of $\mathcal{A}$. If $\mathcal{E}:\mathcal{A}\longrightarrow \mathcal{B}$ is a tracial conditional expectation, then
\begin{eqnarray}
U_{\rho,\mathcal{E}}(A) U_{\rho,\mathcal{E}}(B) \geq \frac{1}{4}|\mathcal{E}(\rho [A,B])|^2
\end{eqnarray}
for all self-adjoint operators $A,B \in \mathcal{A}$ and each $\mathcal{E}$-density operator $\rho \in \mathcal{A}$.
\end{theorem}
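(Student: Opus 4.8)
The plan is to deduce the inequality from two \emph{mixed} uncertainty estimates,
$$I_{\rho,\mathcal{E}}(A)\,J_{\rho,\mathcal{E}}(B)\geq \tfrac14\,|\mathcal{E}(\rho[A,B])|^2\quad\text{and}\quad J_{\rho,\mathcal{E}}(A)\,I_{\rho,\mathcal{E}}(B)\geq \tfrac14\,|\mathcal{E}(\rho[A,B])|^2,$$
and then to multiply them. The structural fact I would exploit throughout is that, by equality (\ref{concom}), every $\mathcal{E}(X)$ commutes with all of $\mathcal{B}$, so the range of $\mathcal{E}$ sits inside the commutative algebra $\mathcal{Z}(\mathcal{B})$. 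Consequently $\sharp$ is just the square root of a product, all the relevant quantities commute, and the positivity of a $2\times2$ matrix over $\mathcal{Z}(\mathcal{B})$ reduces to its scalar determinant condition. Writing $c=\mathcal{E}(\rho[A,B])$, which is skew-adjoint and central so that $|c|^2=-c^2$, the two mixed inequalities give, in the commutative algebra $\mathcal{Z}(\mathcal{B})$,
$$U_{\rho,\mathcal{E}}(A)^2\,U_{\rho,\mathcal{E}}(B)^2=\big(I_{\rho,\mathcal{E}}(A)J_{\rho,\mathcal{E}}(B)\big)\big(J_{\rho,\mathcal{E}}(A)I_{\rho,\mathcal{E}}(B)\big)\geq \tfrac1{16}|c|^4,$$
and taking central square roots yields the assertion.

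To establish the first mixed inequality I would introduce the commutator $X=[\rho^{1/2},A]=\rho^{1/2}A-A\rho^{1/2}$ and the recentred anticommutator $Y=\{\rho^{1/2},B_0\}=\rho^{1/2}B_0+B_0\rho^{1/2}$, where $B_0=B-\mathcal{E}(\rho B)$. The Gram matrix $\begin{bmatrix}X^*X & X^*Y\\ Y^*X & Y^*Y\end{bmatrix}=\begin{bmatrix}X^*&0\\ Y^*&0\end{bmatrix}\begin{bmatrix}X&Y\\0&0\end{bmatrix}$ is positive in $M_2(\mathcal{A})$. Applying the (completely, hence $2$-)positive map $\mathcal{E}$ entrywise preserves positivity, and since the range of $\mathcal{E}$ is commutative, the positivity of the image reduces to the determinant inequality $\mathcal{E}(X^*X)\,\mathcal{E}(Y^*Y)\geq|\mathcal{E}(X^*Y)|^2$ (Lemma \ref{matrixp}, applied after a harmless $\varepsilon$-shift should $\mathcal{E}(X^*X)$ fail to be invertible). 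Everything then rests on computing the three entries using the tracial identity $\mathcal{E}(ST)=\mathcal{E}(TS)$, the density relation $\mathcal{E}(\rho)=I$, and the pull-out rules $\mathcal{E}(S\mu)=\mathcal{E}(S)\mu$ and $\mathcal{E}(\mu S)=\mu\mathcal{E}(S)$ for central $\mu$.

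The first and third entries are the pleasant ones. A direct cyclic computation gives $\mathcal{E}(X^*X)=2I_{\rho,\mathcal{E}}(A)$, and, most cleanly, the symmetric cross terms cancel to leave $\mathcal{E}(X^*Y)=\mathcal{E}(\rho B_0A)-\mathcal{E}(\rho AB_0)=-\mathcal{E}(\rho[A,B_0])=-\mathcal{E}(\rho[A,B])=-c$, where the last step uses $\mathcal{E}(\rho[A,\mathcal{E}(\rho B)])=0$, itself a consequence of centrality and traciality. Thus $|\mathcal{E}(X^*Y)|^2=|c|^2$ with \emph{no} surviving real part, which is exactly what makes the bound sharp; it is precisely the reason for pairing a commutator with an anticommutator. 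Combined with $\mathcal{E}(Y^*Y)=2J_{\rho,\mathcal{E}}(B)$ this yields $4\,I_{\rho,\mathcal{E}}(A)J_{\rho,\mathcal{E}}(B)\geq|c|^2$, and interchanging $A$ and $B$ (using $\mathcal{E}(\rho[B,A])=-c$) gives the second mixed inequality.

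I expect the genuine obstacle to be the identity $\mathcal{E}(Y^*Y)=2J_{\rho,\mathcal{E}}(B)$, i.e. that the recentred anticommutator reproduces $J_{\rho,\mathcal{E}}(B)=\mathcal{E}(\rho B^2)+\mathcal{E}(\rho^{1/2}B\rho^{1/2}B)-2\mathcal{E}(\rho B)^2$ on the nose. In contrast to the scalar setting, the mean $\mu=\mathcal{E}(\rho B)$ is now a central \emph{operator} that need not commute with $\rho^{1/2}$, so the recentring must be checked term by term: expanding $Y^2$ and applying $\mathcal{E}$ gives $\tfrac12\mathcal{E}(Y^2)=\mathcal{E}(\rho B_0^2)+\mathcal{E}(\rho^{1/2}B_0\rho^{1/2}B_0)$, after which one verifies $\mathcal{E}(\rho B_0^2)=\mathcal{E}(\rho B^2)-\mu^2$ and $\mathcal{E}(\rho^{1/2}B_0\rho^{1/2}B_0)=\mathcal{E}(\rho^{1/2}B\rho^{1/2}B)-\mu^2$, at each stage sliding $\mu$ out of the trace by traciality and the pull-out rule and collapsing $\mathcal{E}(\rho)=I$. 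Showing that the mixed terms contribute exactly $-2\mu^2$, rather than something irreducible like $\rho^{1/2}\mu\rho^{1/2}$, is the delicate cancellation on which the whole argument turns, and is the step I would carry out with the most care.
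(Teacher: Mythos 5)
Your proof is correct and follows essentially the same route as the paper's: pair the (skew-adjoint) commutator $[\rho^{1/2},A]$ with the recentred anticommutator $\{\rho^{1/2},B_0\}$, use traciality and the bimodule property to compute $\mathcal{E}(X^*X)=2I_{\rho,\mathcal{E}}(A)$, $\mathcal{E}(Y^*Y)=2J_{\rho,\mathcal{E}}(B)$ and the cross term $-\mathcal{E}(\rho[A,B])$, obtain the mixed inequality $I_{\rho,\mathcal{E}}(A)J_{\rho,\mathcal{E}}(B)\geq\tfrac14|\mathcal{E}(\rho[A,B])|^2$, and combine it with its $A\leftrightarrow B$ counterpart using the centrality of $\mathrm{ran}(\mathcal{E})$. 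The only real difference is the implementation of the Cauchy--Schwarz step: you pass through the $2\times2$ Gram matrix and Lemma \ref{matrixp} (with an $\varepsilon$-shift), whereas the paper expands $\mathcal{E}(X^*X)\geq 0$ for $X=\mathrm{i}[\rho^{1/2},A_0]Z+\{\rho^{1/2},B_0\}$ as a quadratic in an auxiliary central self-adjoint $Z$ and optimizes at $Z=-\tfrac{\mathrm{i}}{2}I_{\rho,\mathcal{E}}(A)^{-1}\mathcal{E}(\rho[A,B])$ --- two equivalent mechanisms for the same estimate.
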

\begin{proof}
Consider self-adjoint operators $A_0=A-\mathcal{E}(\rho A)$ and $B_0=B-\mathcal{E}(\rho B)$. A simple calculation shows that
\begin{eqnarray}\label{IJ}
I_{\rho,\mathcal{E}}(A)=\frac{1}{2} \mathcal{E}((\mbox{i}[\rho^\frac{1}{2},A_0])^2) \ {\rm and}\ J_{\rho,\mathcal{E}}(B)=\frac{1}{2} \mathcal{E} (\{\rho^\frac{1}{2},B_0\}^2),
\end{eqnarray}
where $\{\rho^\frac{1}{2},B\}=\rho^\frac{1}{2}B_0+B_0\rho^\frac{1}{2}$. Indeed,
\begin{eqnarray*}
\mathcal{E}((\mbox{i}[\rho^\frac{1}{2},A_0])^2)&=&-\mathcal{E}(\rho^\frac{1}{2}A_0\rho^\frac{1}{2}A_0-\rho^\frac{1}{2}A_0^{2}\rho^\frac{1}{2}-A_0\rho^\frac{1}{2}\rho^\frac{1}{2}A_0+A_0\rho^\frac{1}{2}A_0\rho^\frac{1}{2})\\ &=& 2 \mathcal{E}(\rho A_0^2)-2\mathcal{E}(\rho^\frac{1}{2}A_0\rho^\frac{1}{2}A_0)\\ && \qquad \qquad \qquad ({\rm by\
 the \ tracial\ property\ of }\ \mathcal{E})\\ &=& 2\mathcal{E}\Big(\rho (A-\mathcal{E}(\rho A))^2\Big)-2\mathcal{E}\Big(\rho^\frac{1}{2} (A-\mathcal{E}(\rho A))\rho^\frac{1}{2}(A-\mathcal{E}(\rho A))\Big)\\ &=& 2\mathcal{E}(\rho A^2)-2(\mathcal{E}(\rho A))^2 - 2\mathcal{E}(\rho^\frac{1}{2}A\rho^\frac{1}{2}A)+2(\mathcal{E}(\rho A))^2\\ && \qquad \qquad \qquad ({\rm since\ \mathcal{E}\ is\ a \ conditional\ expectation}) \\ &=&2\mathcal{E}(\rho A^2) -2 \mathcal{E}(\rho^\frac{1}{2}A\rho^\frac{1}{2}A).
\end{eqnarray*}
Similarly, we can establish the other inequality in (\ref{IJ}).\\
Let $Z\in \mathcal{B}$. Then
\begin{eqnarray*}
\mathcal{E}\Big(\mbox{i}Z [\rho^\frac{1}{2},A_0] \{\rho^\frac{1}{2},B_0\} &+& \mbox{i} \{\rho^\frac{1}{2},B_0\}[\rho^\frac{1}{2},A_0]Z\Big)\\ &=&Z\mathcal{E}\Big(\mbox{i}\big(\rho^\frac{1}{2}A_0\rho^\frac{1}{2}B_0 +\rho^\frac{1}{2}A_0B_0\rho^\frac{1}{2}-A_0\rho^\frac{1}{2}\rho^\frac{1}{2}B_0\\ && -A_0\rho^\frac{1}{2}B_0\rho^\frac{1}{2}+\rho^\frac{1}{2}B_0\rho^\frac{1}{2}A_0\\ && -\rho^\frac{1}{2}B_0A_0\rho^\frac{1}{2}+B_0\rho^\frac{1}{2}\rho^\frac{1}{2}A_0-B_0\rho^\frac{1}{2}A_0\rho^\frac{1}{2}\big)\Big)\\ && \qquad \qquad \qquad ({\rm by\ equality\ (\ref{concom}}))\\ &=&2\mbox{i}Z\mathcal{E}(\rho[A_0,B_0])\\ &=& 2\mbox{i}Z\mathcal{E} \big(\rho \left[A-\mathcal{E}(\rho A), B-\mathcal{E}(\rho B)\right]\big)\\ &=& 2\mbox{i}Z\mathcal{E}\Big( \rho \big((A-\mathcal{E}(\rho A))(B-\mathcal{E}(\rho B))\\ && - (B-\mathcal{E}(\rho B)) (A-\mathcal{E}(\rho A))\big) \Big)\\ &=&2\mbox{i}Z\mathcal{E} \Big(\rho \big(AB-A\mathcal{E}(\rho B)-\mathcal{E}(\rho A)B+\mathcal{E}(\rho A)\mathcal{E}(\rho B)\\ &&-BA+B\mathcal{E}(\rho A)+\mathcal{E}(\rho B)A-\mathcal{E}(\rho B)\mathcal{E}(\rho A)\big)\Big) \\ &=&2\mbox{i}Z\big(\mathcal{E} (\rho AB)- \mathcal{E}(\rho A)\mathcal{E}(\rho B)-\mathcal{E}(\rho A)\mathcal{E}(\rho B)\\ && +\mathcal{E}(\rho) \mathcal{E}(\rho A)\mathcal{E}(\rho B)-\mathcal{E}(\rho BA)+\mathcal{E}(\rho B)\mathcal{E}(\rho A)\\ && +\mathcal{E}(\rho B)\mathcal{E}(\rho A)-\mathcal{E}(\rho)\mathcal{E}(\rho B)\mathcal{E}(\rho A)\big) \\ &=&2\mbox{i}Z\mathcal{E}(\rho[A,B])\\ && \qquad \qquad \qquad ({\rm by \ equality \ \ref{concom} }).
\end{eqnarray*}
Hence,
\begin{eqnarray}\label{rabete1}
\mathcal{E}\Big(\mbox{i}Z [\rho^\frac{1}{2},A_0] \{\rho^\frac{1}{2},B_0\} &+& \mbox{i} \{\rho^\frac{1}{2},B_0\}[\rho^\frac{1}{2},A_0]Z\Big)=2\mbox{i}Z\mathcal{E}(\rho[A,B]).
\end{eqnarray}
Let $Z \in \mathcal{B}$ be a self-adjoint operator and $X=\mbox{i}[\rho^\frac{1}{2},A_0]Z +\{\rho^\frac{1}{2},B_0\}$. Then
\begin{eqnarray*}
0\leq \mathcal{E}(X^*X)&=& \mathcal{E}\Big(\big(\mbox{i}Z [\rho^\frac{1}{2},A_0]+\{\rho^\frac{1}{2},B_0\}\big)\big(\mbox{i}[\rho^\frac{1}{2},A_0]Z+\{\rho^\frac{1}{2},B_0\}\big)\Big) \\ && \qquad \qquad \qquad \qquad \qquad ({\rm because \ \mbox{i}[\rho^\frac{1}{2},A_0]\ is \ }{\rm self-adjoint}) \\ &=& \mathcal{E}\Big(-Z [\rho^\frac{1}{2},A_0] [\rho^\frac{1}{2},A_0]Z+iZ [\rho^\frac{1}{2},A_0] \{\rho^\frac{1}{2},B_0\} \\ &&+ \mbox{i} \{\rho^\frac{1}{2},B_0\}[\rho^\frac{1}{2},A_0]Z +\{\rho^\frac{1}{2},B_0\}^2\Big)\\ &=&2I_{\rho,\mathcal{E}}(A)Z^2+2\mbox{i} \mathcal{E}(\rho [A,B]) Z+2J_{\rho,\mathcal{E}}(B) \\ && \qquad \qquad \qquad \qquad \qquad ({\rm by \ equality\ (\ref{IJ})\ and \ equality\ (\ref{rabete1}) }).
\end{eqnarray*}
Without loss of the generality we can assume that $I_{\rho,\mathcal{E}}(A)>0$. If we put $Z:=- \frac{\mbox{i}}{2} I_{\rho,\mathcal{E}}(A)^{-1} \mathcal{E}(\rho [A,B])$, then we get
\begin{eqnarray*}
-\frac{1}{4}I_{\rho,\mathcal{E}}(A)^{-1}\mathcal{E}(\rho[A,B])^2+\frac{1}{2}I_{\rho,\mathcal{E}}(A)^{-1}\mathcal{E}(\rho[A,B])^2+J_{\rho,\mathcal{E}}(B) \geq 0.
\end{eqnarray*}
or equivalently,
\begin{eqnarray}\label{ij1}
I_{\rho,\mathcal{E}}(A)J_{\rho,\mathcal{E}}(B) \geq -\frac{1}{4}\mathcal{E}(\rho[A,B])^2=\frac{1}{4}|\mathcal{E}(\rho [A,B])|^2,
\end{eqnarray}
since $ \mathcal{E}(\rho[A,B])^*=-\mathcal{E}(\rho[A,B])$.
It follows from the fact that for every $X\in \mathcal{A}$, $\mathcal{E}(X)\subseteq \mathcal{Z}(\mathcal{B})$ (equality (\ref{concom})), we have
\begin{eqnarray*}
U_{\rho,\mathcal{E}}(A) U_{\rho,\mathcal{E}}(B)&=& \big(I_{\rho,\mathcal{E}}(A)\sharp J_{\rho,\mathcal{E}}(A)\big) \big(I_{\rho,\mathcal{E}}(B)\sharp J_{\rho,\mathcal{E}}(B)\big) \\ &=& \big(I_{\rho,\mathcal{E}}(A) J_{\rho,\mathcal{E}}(B)\big)^\frac{1}{2} \big(I_{\rho,\mathcal{E}}(B) J_{\rho,\mathcal{E}}(A)\big)^\frac{1}{2}\\ && \qquad ({\rm by \ the \ commutivity\ property\ in\ equality\ (\ref{concom}) })\\ &\geq& \frac{1}{4}|\mathcal{E}(\rho [A,B])|^2\\ && \qquad ({\rm by \ inequlity\ (\ref{ij1} )}).
\end{eqnarray*}
\end{proof}
As a consequence we get the following result of Luo \cite{luo}.
\begin{corollary}\cite[p. 2]{luo}
If $A,B$ are two self-adjoint operators, then
\begin{eqnarray*}
U_{\rho}(A)U_{\rho}(B)\geq \frac{1}{4} |{\rm Tr}(\rho [A,B])|^2.
\end{eqnarray*}

\end{corollary}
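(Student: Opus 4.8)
The plan is to recover Luo's inequality as the scalar specialization of Theorem \ref{maincon}. Concretely, I would take $\mathcal{A}=\mathbb{B}(\mathcal{H})$ (acting on the trace-class operators), let $\mathcal{B}=\mathbb{C}$ sit inside $\mathcal{A}$ as the scalar multiples of $I$, and let $\mathcal{E}:={\rm Tr}$ be the usual trace viewed as a tracial positive linear map into $\mathbb{C}$. The target range $\mathbb{C}$ is commutative, so the only structural facts actually used in the proof of Theorem \ref{maincon}—the tracial identity and the pull-out relation \eqref{concom}, which for scalar range degenerates to mere linearity together with traciality, and the centrality of the range—are automatically available. Thus the conclusion of Theorem \ref{maincon} applies with $\mathcal{E}={\rm Tr}$.

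First I would check that every generalized quantity collapses to its classical counterpart. Since $\rho$ is a density operator, ${\rm Tr}(\rho)=1$, whence $V_{\rho,{\rm Tr}}(A)={\rm Tr}(\rho A^2)-({\rm Tr}(\rho A))^2=V_\rho(A)$ and $I_{\rho,{\rm Tr}}(A)={\rm Tr}(\rho A^2)-{\rm Tr}(\rho^{\frac12}A\rho^{\frac12}A)=I_\rho(A)$, so that $J_{\rho,{\rm Tr}}(A)=2V_\rho(A)-I_\rho(A)=J_\rho(A)$. The only genuinely substantive point is the identification of $U_{\rho,{\rm Tr}}$ with Luo's $U_\rho$: since $I_\rho(A)$ and $J_\rho(A)$ are commuting nonnegative scalars, their operator geometric mean reduces to the ordinary square-root mean, and hence
\begin{eqnarray*}
U_{\rho,{\rm Tr}}(A)=I_\rho(A)\sharp J_\rho(A)=\sqrt{I_\rho(A)J_\rho(A)}=\sqrt{V_\rho(A)^2-(V_\rho(A)-I_\rho(A))^2}=U_\rho(A),
\end{eqnarray*}
where the third equality rests on the elementary identity $I_\rho(2V_\rho-I_\rho)=V_\rho^2-(V_\rho-I_\rho)^2$.

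Inserting these identifications into Theorem \ref{maincon} and using $\mathcal{E}(\rho[A,B])={\rm Tr}(\rho[A,B])$ then gives
\begin{eqnarray*}
U_\rho(A)U_\rho(B)=U_{\rho,{\rm Tr}}(A)U_{\rho,{\rm Tr}}(B)\geq \frac14\left|{\rm Tr}(\rho[A,B])\right|^2,
\end{eqnarray*}
which is the asserted inequality. The hard part will not be any computation but rather the justification that the trace, although not literally a conditional expectation in the contractive-projection sense, nevertheless supplies exactly the algebraic properties (traciality, positivity, and commutative range) invoked in the proof of Theorem \ref{maincon}; I expect verifying this compatibility to be the main, and essentially only, obstacle.
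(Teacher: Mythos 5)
Your proposal is correct and is exactly the route the paper takes: the corollary is stated with no separate proof, as the scalar specialization $\mathcal{E}={\rm Tr}$, $\mathcal{B}=\mathbb{C}$ of Theorem \ref{maincon}, with the identification $U_{\rho,{\rm Tr}}=\sqrt{I_\rho J_\rho}=U_\rho$ that you verify via $I_\rho(2V_\rho-I_\rho)=V_\rho^2-(V_\rho-I_\rho)^2$. Your closing caveat --- that ${\rm Tr}$ is not literally a contractive projection onto a subalgebra but does satisfy every property (traciality, positivity, the pull-out identity \eqref{concom}, centrality of the range, and $\mathcal{E}(\rho)=1$) actually invoked in the proof of Theorem \ref{maincon} --- is a legitimate and slightly more careful observation than the paper itself makes, but it does not change the argument.
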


\end{document}